\newcommand{\vdim}{\underline{\mathbf{inc}}}
\newcommand{\tr}{\mathbf{tr}}
\newcommand{\Dyn}{\mathbf{Dyn}}
\newcommand{\Z}{\mathbb{Z}}
\newcommand{\Inc}{\mathbf{Inc}}
\newcommand{\A}{\mathbb{A}}
\newcommand{\D}{\mathbb{D}}
\newcommand{\E}{\mathbb{E}}
\newcommand{\CRnk}{\mathbf{cork}}
\newcommand{\Id}{\mathbf{I}}
\newcommand{\Quad}{\mathbf{UQuad}}
\newcommand{\Quiv}{\mathbf{UQuiv}}
\newcommand{\Adj}{\widecheck{\mathrm{Adj}}}
\newcommand{\SAdj}{\mathrm{Adj}}
\newcommand{\Star}{\mathbb{S}}
\newcommand{\bas}{\mathbf{e}}
\newcommand{\sou}{\mathbf{s}}
\newcommand{\tar}{\mathbf{t}}
\newcommand{\Cox}{\mathrm{\Phi}}
\newcommand{\cox}{\mathrm{\varphi}}
\newcommand{\coxN}{\mathbf{c}}
\newcommand{\RcoxN}{\mathbf{C}_{re}}
\newcommand{\va}{\lambda}
\newcommand{\CSpec}{\mathbf{CSpec}}
\newcommand{\MiNu}{\mathbf{v}}
\newcommand{\LCox}{\Lambda}
\newcommand{\chr}{\mathrm{char}}
\newcommand{\Part}{\mathcal{P}}
\newcommand{\ct}{\mathbf{ct}}
\newcommand{\lcm}{\mathbf{lcm}}
\newcommand{\bulito}{\mathmiddlescript{\bullet}}
\newcommand{\dd}{\mathbbm{d}}
\newcommand\mathmiddlescript[1]{\vcenter{\hbox{$\scriptstyle #1$}}}
\newtheorem{algorithm}{Algorithm}
\begin{document}
	
\setcounter{page}{221}
\publyear{22}
\papernumber{2109}
\volume{185}
\issue{3}

   \finalVersionForARXIV

\title{Coxeter Invariants for Non-negative Unit Forms of Dynkin Type $\A_{r}$}

\author{Jes\'us Arturo~Jim\'enez Gonz\'alez\thanks{Address of correspondence: Instituto de Matem\'aticas, Mexico City, Mexico. \newline \newline
          \vspace*{-6mm}{\scriptsize{Received June 2021; \ accepted Febryary 2022.}}}
\\
 Instituto de Matem\'aticas, UNAM, Mexico\\
jejim@im.unam.mx
}

\maketitle

\runninghead{J.A. Jim\'enez Gonz\'alez}{Coxeter Invariants for Non-negative Unit Forms of Dynkin Type $\A_{r}$}

\vspace*{-5mm}
\begin{abstract}
Two integral quadratic unit forms are called strongly Gram congruent if their upper triangular Gram matrices are $\Z$-congruent. The paper gives a combinatorial strong Gram invariant for those unit forms that are non-negative of Dynkin type $\A_{r}$ (for $r \geq 1$), within the framework introduced in~[Fundamenta Informaticae~\textbf{184}(1):49--82, 2021], and uses it to determine all corresponding Coxeter polynomials and (reduced) Coxeter numbers.

\medskip\noindent
\textbf{Keywords:}
Integral quadratic form,  Gram congruence,   Dynkin type,  Coxeter polynomial,  edge-bipartite graph, quiver, incidence matrix,  signed line graph.
2020 MSC:  15A63, 15A21, 15B36, 05C22, 05C50, 05C76, 05B20.
\end{abstract}

\section{Introduction} \label{(S):Zero}

\subsection*{Basic notions.}

Integral quadratic forms (that is, homogeneous polynomials of degree two with integer coefficients), have been a central topic of study, sometimes indirectly, in many areas of abstract algebra and graph theory (see for instance~\cite{kB83,cmR,BKL06,BR07,BPS11} and the introductory notes of~\cite{BJP19,BGZ06,CDD21}). One approach, initiated by Simson in~\cite{dS11a,dS13} and developed intensively by Simson and collaborators, see for instance~\cite{KS15a,KS15b,KS15c,dS16a,SZ17,dS18,dS20,dS21a,dS21b}, consists in substituting $q$ by the (upper triangular) standard morsification $b_q:\Z^n \times \Z^n \to \Z$ of $q$, and focuses on the Coxeter formalism of $b_q$. In this way, one defines the strong Gram congruence among unit forms, and attaches the Coxeter invariants of $b_q$ to $q$, which are also strong Gram invariants of $q$ (cf.~\cite[Lemma~1.3]{dS21a} or~\cite[Lemma~4.6]{jaJ2020a}), a point of view mainly motivated from the Auslander-Reiten theory of associative algebras, see~\cite{dS11a,dS13} and references therein.

The aim of this paper is to explicitly determine some classical Coxeter invariants, namely the Coxeter polynomial and the (reduced) Coxeter number, associated to a connected non-negative unit form of Dynkin type $\A_{r}$ ($r \geq 1$).  Work in this direction may be found, for instance, in~\cite[Theorem~3.3]{dS13}, \cite[Theorem~3.2]{dS13a}, \cite[Theorem~5.1]{KS15b} and~\cite[Theorem~2.3]{FS13b} for small number of variables, in~\cite[Theorem~2.4]{dS21a} and \cite[Theorem~2.2]{dS21b} for the Dynkin types $\A$ and $\D$ of corank zero, respectively, and in~\cite[Theorem~1.10]{GSZ14} and~\cite[Corollary~11]{SZ12} for quadratic forms associated to principal posets (corank one). We follow the graph theoretical technique introduced in~\cite{jaJ2018}, and applied recently to the study of the strong Gram congruence in~\cite{jaJ2020a}. We refer the reader to the introduction of~\cite{jaJ2020a} for some historical remarks and further references for these methods.

\medskip
Throughout the paper we identify an integral quadratic form $q:\Z^n\! \to\! \Z$, $q(x)\!=\!\sum_{1 \leq i \leq j \leq n}q_{i,j}x_ix_j$, with the upper triangular matrix $\widecheck{G}_q=(g_{i,j})$ given by $g_{i,j}=q_{i,j}$ if $1 \leq i \leq j \leq n$ and $g_{i,j}=0$ if $1 \leq j<i \leq n$. This matrix is referred to as (non-symmetric or upper triangular) \textbf{Gram matrix} of $q$, and is the unique upper triangular integer matrix satisfying
\[
q(x)=x^{\tr}\widecheck{G}_qx, \quad \text{for any column vector $x$ in $\Z^n$}.
\]
The quadratic form $q$ is \textbf{unitary} (or a \textbf{unit form}) if all diagonal entries of $\widecheck{G}_q$ are equal to $1$. The \textbf{symmetric Gram matrix} $G_q$ of $q$ is given by $G_q=\widecheck{G}_q+\widecheck{G}_q^{\tr}$. Two unit forms $q$ and $q'$ are called \textbf{weakly} (resp. \textbf{strongly}) \textbf{Gram congruent}, if there is a $\Z$-invertible $n \times n$ matrix $B$ such that $G_{q'}=B^{\tr}G_qB$ (resp. $\widecheck{G}_{q'}=B^{\tr}\widecheck{G}_qB$), written $q' \sim^B q$ or $q' \sim q$ (resp. $q' \approx^B q$ or $q' \approx q$).  In what follows we use standard notions and results on quadratic forms, such as positivity, non-negativity, corank, connectedness and Dynkin type (cf.~\cite{BP99,SZ17,BJP19,dS20}).  For instance, the weak Gram classification of non-negative unit forms, achieved in~\cite{BP99} and~\cite{dS16a} with different methods, assigns a unique Dynkin type $\A_n$, $\D_m$ or $\E_p$ (for $n \geq 1$, $m \geq 4$ or $p \in \{6,7,8\}$) and a non-negative corank to any weak congruence class of non-negative unit forms. Since, clearly, the strong Gram congruence refines the weak one, it is natural to approach the strong classification problem by the cases of the weak classification. Here we continue the study of the strong Gram congruence among non-negative unit forms of Dynkin type $\A_{r}$ started in~\cite{jaJ2020a}. For convenience, we present relevant definitions and constructions of~\cite{jaJ2020a} as needed.

\medskip
Note that if $q$ is a connected non-negative unit form in $n \geq 1$ variables, then the corank $c$ of $q$ is smaller than $n$, since $n-c$ is the rank of $q$. For $0 \leq c <n$, we denote by $\Quad^c_{\A}(n)$ the set of connected non-negative unit forms in $n$ vertices, with corank $c$ and Dynkin type $\A_{n-c}$,
\[
\Quad^c_{\A}(n)=\{q:\Z^n \to \Z \mid \text{$q$ is connected, $q \geq 0$, $\Dyn(q)=\A_{n-c}$ and $\CRnk(q)=c$}\}.
\]

\subsection*{Partitions and permutations.}

A \textbf{partition} $\pi$ of an integer $m \geq 1$ (written $\pi \vdash m$) is a non-increasing sequence of positive integers $\pi=(\pi_1,\ldots,\pi_{\ell(\pi)})$ for some $\ell(\pi)\geq 1$, such that $m=\sum_{a=1}^{\ell(\pi)}\pi_a$. The integer $\ell(\pi)$ is called \textbf{length} or \textbf{number of parts} of $\pi$. For instance, let $\rho$ be a \textbf{permutation} of the set $\{1,\ldots,m\}$. The orbits of $\rho$ determine a set-partition
\[
\{1,\ldots,m\}=\mathcal{P}_1 \sqcup \ldots \sqcup \mathcal{P}_{\ell},
\]
for some $\ell \geq 1$ (that is, two indices $v,v' \in \{1,\ldots,m\}$ belong to the same subset $\mathcal{P}_r$ if and only if there is $t \geq 0$ such that $v'=\rho^t(v)$, and for any element $v$ there is a subset $\mathcal{P}_r$ containing $v$). The sequence of cardinalities of $\mathcal{P}_1,\ldots,\mathcal{P}_{\ell}$, ordered non-increasingly, is a partition $\pi(\rho)$ of $m$, usually called the \textbf{cycle type} (or cycle structure) of the permutation $\rho$ (cf.~\cite[\S 2.2]{jR06}). Conversely, for any partition $\pi=(\pi_1,\ldots,\pi_{\ell})$ of $m$, denote by $\rho_{\pi}$ the permutation of $\{1,\ldots,m\}$ given as composition of cycles of length $\pi_r$,
\[
\rho_{\pi}=(1,\ldots,\pi_1)(\pi_1+1,\ldots,\pi_1+\pi_2)\cdots (\pi_1+\ldots+\pi_{\ell-1}+1,\ldots,m-1,m).
\]
Clearly, $\pi(\rho_{\pi})=\pi$. It is well known that for two permutations $\rho$ and $\rho'$ of the set $\{1,\ldots,m\}$, we have $\pi(\rho)=\pi(\rho')$ if and only if $\rho$ and $\rho'$ are conjugated permutations (that is, $\rho'=\xi\rho\xi^{-1}$ for some permutation $\xi$ of $\{1,\ldots,m\}$, see for instance~\cite[Proposition~2.33]{jR06}). In particular, if $P(\rho)$ denotes the matrix with $P(\rho)\bas_v=\bas_{\rho(v)}$ for any $v \in \{1,\ldots,m\}$ where $\bas_v$ is the $v$-th canonical vector of $\Z^m$ ($P(\rho)$ is called the \textbf{permutation matrix} of $\rho$), then the characteristic polynomial $\chr_{P(\rho)}(\va)$ of $P(\rho)$ only depends on the cycle type $\pi(\rho)$ of $\rho$. Since the characteristic polynomial of the permutation matrix of a cycle of length $r$ is $(\va^r-1)$, if $\pi(\rho)=(\pi_1,\ldots,\pi_{\ell})$ then the characteristic polynomial of $P(\rho)$ is
\[
\chr_{P(\rho)}(\va)=\prod_{a=1}^{\ell(\pi)}(\va^{\pi_a}-1).
\]
Define the \textbf{characteristic polynomial} of a partition $\pi$ as the characteristic polynomial of the permutation matrix $P(\rho_{\pi})$, that is, $\chr_{\pi}(\va):=\chr_{P(\rho_{\pi})}(\va)$. By the comments above, $\chr_{\pi}(\va)$ is the characteristic polynomial of the permutation matrix of any permutation with cycle type $\pi$.

\medskip
For arbitrary $c \geq 0$ and $m \geq 1$, we consider the set of partitions of the integer $m$ having their number of parts restricted by $c$ as follows,
\[
\Part_1^{c}(m)=\{ \pi \vdash m \mid 0 \leq c-(\ell(\pi)-1) \equiv 0 \mod 2 \}.
\]

\subsection*{Overview of the paper.}

The \textbf{Coxeter matrix} $\Cox_q$ of a unit form $q$ is given by $\Cox_q=-\widecheck{G}^{\tr}_q\widecheck{G}^{-1}_q$ (compare with more usual definitions as given in~\cite{BJP19} or~\cite{dS16a}). The characteristic polynomial of $\Cox_q$ is called \textbf{Coxeter polynomial} of $q$, and is denoted by $\cox_q(\va)$. It is well known that if $q' \approx q$ for a unit form $q'$, then $\cox_{q'}(\va)=\cox_q(\va)$ (see for instance~\cite[Lemma~4.6]{jaJ2020a}). Our goal is to prove the following result (see Theorem~\ref{MAIN} below).

\subsection*{Main theorem.}

For any integers $0 \leq c < n$, there is a surjective function
\[
\xymatrix{ \Quad_{\A}^c(n) \ar[r]^-{\ct} & \Part^c_1(n-c+1),}
\]
which is invariant under strong Gram congruence, and such that for any $q$ in $\Quad_{\A}^c(n)$,
\[
\cox_q(\va)=(\va-1)^{c-1}\chr_{\ct(q)}(\va).
\]

The partition $\ct(q)$ will be referred to as \textbf{cycle type} of $q$, and the induced function on the quotient $\Quad_{\A}^c(n)/\approx$ will be also denoted by $\ct$.  Although the constructions leading to the proof of the Main Theorem are straightforward, most of the preparatory arguments are fairly technical. For convenience, we sketch the steps of the proof (see precise definitions below).
\begin{itemize}
\itemsep=0.95pt
 \item[i)] For a quiver $Q$ with $m$ vertices and $n$ arrows, and vertex-arrow incidence matrix $I(Q)$, consider the quadratic form $q_Q:\Z^n \to \Z$ given by
\[
  q_Q(x)=\frac{1}{2}||I(Q)x||^2.
\]
It is shown in~\cite{jaJ2018} that the set $\Quad_{\A}^c(n)$ is precisely the set $\{q_Q\}$ over all connected loop-less quivers $Q$ with $n$ arrows and $m=n-c+1$ vertices. This ``quiver realization'' facilitates the study of weak and strong Gram congruences within the set $\Quad_{\A}^c(n)$, as shown in~\cite{jaJ2020a}.
 \item[ii)] Using $(i)$, the definition of the cycle type $\ct(q)$ of a quadratic form $q$ in $\Quad_{\A}^c(n)$ follows from the notion of Coxeter-Laplace matrix $\Lambda_Q$ of a loop-less quiver $Q$ introduced in Theorem~\ref{T(O):main}, where it is shown that $\Lambda_Q$ is the permutation matrix of a permutation $\xi^-_Q$ of the set of vertices of $Q$. The construction of $\xi^-_Q$ and the proof of Theorem~\ref{T(O):main} is the purpose of Sections~\ref{(S):Min} and~\ref{(S):One}.
 \item[iii)] The strong Gram congruence invariance of the cycle type $\ct$ follows from Theorem~\ref{T(O):main} and some observations on the mapping $Q \mapsto q_Q$ presented in Lemma~\ref{L(O):well}.
 \item[vi)] Some technical considerations to determine the image of $\ct$ are given in Sections~\ref{(S):Two} and~\ref{(S):tHree}. In particular, in Definition~\ref{Ex(H)} we fix a set of quadratic forms $q$ in $\Quad_{\A}^c(n)$ representing those Coxeter polynomials permitted by Proposition~\ref{P(T):walk}.
 \item[v)] The Coxeter polynomial of a member of $\Quad_{\A}^c(n)$ is computed in Corollary~\ref{C:pol} with help of Theorem~\ref{T(O):main} (see also Algorithm~\ref{A:four}).
\end{itemize}

The Main Theorem is proved in Section~\ref{(S):Four}, collecting the results of previous sections. As application, in Corollary~\ref{C(F):num} we determine the (reduced) Coxeter number of any unit form $q$ in $\Quad_{\A}^c(n)$. In Section~\ref{(S):Five} we provide algorithms to compute the cycle type (Algorithms~\ref{A:one} and~\ref{A:two}) and Coxeter polynomials of such unit forms (Algorithms~\ref{A:three} and~\ref{A:four}), and comment on their spectral properties (Remark~\ref{RalgMult}).

All matrices in the paper have integer coefficients. The canonical basis of $\Z^n$ is denoted by $\bas_1,\ldots,\bas_n$ and the identity $n \times n$ matrix is denoted by $\Id_n$, and simply by $\Id$ for appropriate size. The transpose of a matrix $A$ is denoted by $A^{\tr}$, and if $A$ is an invertible square matrix, then $A^{-\tr}$ denotes $(A^{-1})^{\tr}$. If $A_1,\ldots,A_n$ are the columns of $A$, we write $A=[A_1 | A_2 |\ldots |A_n]$.

\section{Minimally monotonous walks} \label{(S):Min}

In this section we recall the definition of a quiver $Q$ and its (vertex-arrow) incidence matrix $I(Q)$. It was shown in~\cite[Proposition~4.4]{jaJ2020a} that if $Q$ has no loop, and $\widecheck{G}_Q$ is the upper triangular Gram matrix of $Q$ (defined below), then $I(Q)\widecheck{G}_Q^{-1}$ is also the incidence matrix of a loop-less quiver, called inverse quiver of $Q$ and denoted by $Q^{-1}$ (Proposition~\ref{INVERSE} below). The integer matrix $I(Q^{-1})=I(Q)\widecheck{G}_Q^{-1}$ plays a fundamental role in our discussion, and deserves a careful analysis. To this end, minimally monotonous (increasing or decreasing) walks were introduced in~\cite{jaJ2020a}. Here we use such walks to give an alternative description of $I(Q^{-1})$ (Lemma~\ref{L(M):Two}), which leads to part of the proof of Theorem~\ref{T(O):main}.

By \textbf{quiver} we mean a quadruple $Q=(Q_0,Q_1,\sou,\tar)$ such that $Q_0$ and $Q_1$ are finite sets (called \textbf{vertices} and \textbf{arrows} of $Q$ respectively), and $\sou,\tar:Q_1 \to Q_0$ are functions (called \textbf{source} and \textbf{target} function of $Q$). Since we want to associate to $Q$, unequivocally, an incidence matrix $I(Q)$, throughout the paper we assume that both the set of vertices and the set of arrows of any quiver are \textbf{totally ordered}, and write $i \leq j$ for arrows $i,j$ in $Q_1$, and $v \leq w$ for vertices $v,w$ in $Q_0$. We identify isomorphic quivers under the assumption that the isomorphism preserves the given orderings on the sets of vertices and arrows. Thus, if $|Q_1|=n$ and $|Q_0|=m$, we may assume without loss of generality that $Q_1=\{1,\ldots,n\}$ and $Q_0=\{1,\ldots,m\}$.

\medskip
The $m\times n$ \textbf{(vertex-arrow) incidence matrix} $I(Q)$ of $Q$ is given by,
\[
I(Q)=[I_{i_1} | \ldots |I_{i_n}], \quad \text{where $I_{i}=\bas_{\sou(i)}-\bas_{\tar(i)}$ for an arrow $i$},
\]
where $\bas_v$ is the $v$-th canonical vector in $\Z^m$ and $i_1,\ldots,i_n$ are the arrows in $Q$. Note that $I_i=0$ if and only if $i$ is a loop in $Q$. Observe also that if $Q'$ is a quiver obtained from $Q$ by a reordering of the set of arrows of $Q$, say via a permutation $\rho$ of $Q_1$, then $I(Q')=I(Q)P(\rho)$. Similarly, if $Q''$ is obtained from $Q$ by a reordering of the set of vertices of $Q$, say by a permutation $\xi$ of $Q_0$, then $I(Q'')=P(\xi)I(Q)$.

\medskip
For a quiver $Q=(Q_0,Q_1,\sou,\tar)$ and an arrow $i \in Q_0$ we take $\MiNu(i)=\{\sou(i),\tar(i)\}$, the set of vertices \textbf{incident} to arrow $i$. For a vertex $v$ and an arrow $i$ in $Q$, consider the following subsets of arrows of $Q$,
\begin{eqnarray}
 Q_1(v)& = & \{ j \in Q_1 \mid v \in \MiNu(j) \}, \nonumber \\
 Q_1^{<}(v,i)& = & \{ j \in Q_1 \mid v \in \MiNu(j) \text{ and } j < i \}, \nonumber \\
 Q_1^{\leq}(v,i)& = & \{ j \in Q_1 \mid v \in \MiNu(j) \text{ and } j \leq i \}, \nonumber
\end{eqnarray}
and take similarly $Q_1^{>}(v,i)$ and $Q_1^{\geq}(v,i)$.

\medskip
For a \textbf{walk} $\alpha=(v_{-1},i_0,v_{0},i_1,v_1,\ldots,v_{\ell-1},i_{\ell},v_{\ell})$ in $Q$ we use the notation
\[
\alpha=i_0^{\epsilon_0}i_1^{\epsilon_1}\cdots i_{\ell}^{\epsilon_{\ell}}, \quad \text{for signs $\epsilon_t=\pm 1$},
\]
where we take $\epsilon_t=+1$ if $\sou(i_t)=v_{t-1}$ and $\tar(i_t)=v_{t}$, and $\epsilon_t=-1$ in case $\sou(i_t)=v_{t}$ and $\tar(i_t)=v_{t-1}$ for $t=0,\ldots,\ell$ (as usual, exponents $+1$ are omitted). The integer $\ell+1$ is called \textbf{length} of $\alpha$, and if $\ell=-1$ then $\alpha$ is called a \textbf{trivial} walk. We take $\sou(\alpha)=v_{-1}$ and $\tar(\alpha)=v_{\ell}$, and call these vertices \textbf{origin} and \textbf{target} of the walks $\alpha$, respectively. The \textbf{reversed walk} of $\alpha$, denoted by $\alpha^{-1}$, is given by $\alpha^{-1}=i_{\ell}^{-\epsilon_{\ell}}i_{\ell-1}^{-\epsilon_{\ell-1}}\cdots i_{0}^{-\epsilon_{0}}$. The following special walks were considered in~\cite[Definition~4.1]{jaJ2020a}:
\begin{itemize}
 \item[a)] We say that the walk $\alpha=i_0^{\epsilon_0}i_1^{\epsilon_1}\cdots i_{\ell}^{\epsilon_{\ell}}$  is \textbf{minimally decreasing} if
\[
i_{t+1}=\max Q_1^{<}(v_t,i_t), \quad  \text{for $t=0,\ldots,\ell-1$}.
\]
 \item[b)] If $\alpha$ is minimally decreasing, we say that $\alpha$ is \textbf{left complete} if whenever $\beta \alpha$ is minimally decreasing for some walk $\beta$, then $\beta$ is a trivial walk. Similarly,  $\alpha$ is \textbf{right complete} if whenever $\alpha \beta$ is minimally decreasing for some walk $\beta$, then $\beta$ is a trivial walk. A left and right complete minimally decreasing walk will be called a \textbf{structural (decreasing) walk}.
\end{itemize}

We will mainly consider the following particular minimally decreasing walks. For an arrow $i$ there are exactly two right complete minimally decreasing walks starting with arrow $i$, one starting at vertex $\sou(i)$ and denoted by $\alpha^-_Q(i^{+1})$, and one starting at vertex $\tar(i)$ and denoted by $\alpha^-_Q(i^{-1})$. To be precise, if
\[
 \alpha_Q^-(i^{+1})=(v_{-1},i_0,v_{0},i_1,v_1,\ldots,v_{\ell-1},i_{\ell},v_{\ell}),
\]
then $v_{-1}=\sou(i)$, $i_0=i$, $i_{t+1}=\max Q_1^{<}(v_t,i_t)$ for $t=0,\ldots,\ell-1$,  and $Q_1^<(v_{\ell},i_{\ell})=\emptyset$. Similarly, if
\[
 \alpha_Q^-(i^{-1})=(v_{-1},i_0,v_{0},i_1,v_1,\ldots,v_{\ell-1},i_{\ell},v_{\ell}),
\]
then $v_{-1}=\tar(i)$, $i_0=i$, $i_{t+1}=\max Q_1^{<}(v_t,i_t)$ for $t=0,\ldots,\ell-1$,  and $Q_1^<(v_{\ell},i_{\ell})=\emptyset$. Note that the walks $\alpha_Q^-(i^{\pm 1})$ are determined by the initial vertex and the first arrow.

Consider now a vertex $v$ and take $i_0=\max Q_1(v)$. If $v=\sou(i_0)$ (resp. if $v=\tar(i_0)$) then $\alpha_Q^-(i_0^{+1})$ is also left complete (resp. $\alpha_Q^-(i_0^{-1})$ is also left complete), and it is therefore, a structural decreasing walk starting at $v$, denoted by $\alpha_Q^-(v)$. If $\gamma$ is an arbitrary structural decreasing walk starting at $v$, then the first arrow of $\gamma$ is necessarily $i_0$ (otherwise $\gamma$ could be extended on the left keeping the minimally decreasing property), and therefore, $\gamma=\alpha_Q^-(v)$.

\medskip
Dually, the \textbf{minimally increasing walks} $\alpha_Q^+(i^{\pm 1})$ are defined as follows. If
\[
 \alpha_Q^+(i^{+1})=(v_{-1},i_0,v_{0},i_1,v_1,\ldots,v_{\ell-1},i_{\ell},v_{\ell}),
\]
then $v_{-1}=\sou(i)$, $i_0=i$, $i_{t+1}=\min Q_1^{>}(v_t,i_t)$ for $t=0,\ldots,\ell-1$,  and $Q_1^>(v_{\ell},i_{\ell})=\emptyset$. Similarly, if
\[
 \alpha_Q^+(i^{-1})=(v_{-1},i_0,v_{0},i_1,v_1,\ldots,v_{\ell-1},i_{\ell},v_{\ell}),
\]
then $v_{-1}=\tar(i)$, $i_0=i$, $i_{t+1}=\min Q_1^{>}(v_t,i_t)$ for $t=0,\ldots,\ell-1$,  and $Q_1^>(v_{\ell},i_{\ell})=\emptyset$. If $i_0=\min Q_1(v)$, take $\alpha_Q^+(v):=\alpha_Q^+(i_0^{+1})$ if $v=\sou(i_0)$, and $\alpha_Q^+(v):=\alpha_Q^+(i_0^{-1})$ if $v=\tar(i_0)$. The following are straightforward observations.

\begin{remark}\label{L(M):One}
Let $Q$ be a loop-less quiver, with vertex $v \in Q_0$ and arrows $i,j \in Q_1$.
\begin{itemize}
 \item[i)] If $w=\tar(\alpha^-_Q(v))$, then $\alpha^+_Q(w)=\alpha^-_Q(v)^{-1}$.

 \item[ii)] If $i$ appears in $\alpha^+_Q(v)$ in the positive orientation (resp. in the negative orientation), then $\tar(\alpha^-_Q(i^{-1}))=v$ (resp. $\tar(\alpha^-_Q(i^{+1}))=v$).

 \item[iii)] If $v=\tar(\alpha^-_Q(i^{\pm 1}))$, then $\alpha^+_Q(v)=\alpha^-_Q(i^{\pm 1})^{-1}\gamma$, for some walk $\gamma$.
 \end{itemize}
\end{remark}

The \textbf{inverse quiver} of a loop-less quiver $Q=(Q_0,Q_1,\sou,\tar)$, as defined in~\cite[Definition~4.2]{jaJ2020a}, is the quiver $Q^{-1}=(Q_0^*,Q_1^*,\sou^*,\tar^*)$ with the same set of vertices as $Q$, the same number of arrows as $Q$ (that is, $Q_0^*=Q_0$ and $|Q_1^*|=|Q_1|$), and such that for each arrow $i$ in $Q$ there corresponds an arrow $i^*$ in $Q^{-1}$ with
\[
\sou^*(i^*)=\tar(\alpha^-_Q(i^{-1})), \quad \text{and} \quad \tar^*(i^*)=\tar(\alpha^-_Q(i^{+1})).
\]
The arrows in $Q^{-1}$ inherit the total ordering of the arrows in $Q$ via the correspondence $i \mapsto i^*$. When allowed by the context, we will drop the asterisk $*$ on arrows of $Q^{-1}$. The unique upper triangular matrix $\widecheck{G}_Q$ such that $I(Q)^{\tr}I(Q)=\widecheck{G}_Q+\widecheck{G}_Q^{\tr}$ is called the \textbf{triangular Gram matrix} of a quiver $Q$ (cf.~\cite[Definition 3.1]{jaJ2020a}).

\begin{proposition}[\cite{jaJ2020a}, Proposition~4.4]\label{INVERSE}
If $Q$ is a loop-less quiver, then $Q^{-1}$ is a loop-less quiver satisfying $(Q^{-1})^{-1}=Q$,
\[
I(Q^{-1})=I(Q)\widecheck{G}^{-1}_Q, \quad \text{and} \quad \widecheck{G}_{Q^{-1}}=\widecheck{G}^{-1}_Q.
\]
Moreover, $Q$ is connected if and only if $Q^{-1}$ is connected.
\end{proposition}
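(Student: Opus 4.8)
The plan is to establish the incidence-matrix identity $I(Q^{-1})=I(Q)\widecheck{G}_Q^{-1}$ first; the three remaining assertions then follow by short linear-algebra arguments. Since $Q$ is loop-less, the diagonal entries of $\widecheck{G}_Q$ are $\tfrac12 I_i^{\tr}I_i=1$, so $\widecheck{G}_Q$ is upper triangular with $1$'s on the diagonal, hence invertible over $\Z$ with $\widecheck{G}_Q^{-1}$ again unipotent upper triangular. Consequently the desired identity is equivalent to $I(Q^{-1})\widecheck{G}_Q=I(Q)$, which I verify one column at a time.

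Reading off the $i$-th column and using $(\widecheck{G}_Q)_{i,i}=1$ and $(\widecheck{G}_Q)_{j,i}=I_j^{\tr}I_i$ for $j<i$, together with $I(Q^{-1})\bas_j=\bas_{\tar(\alpha^-_Q(j^{-1}))}-\bas_{\tar(\alpha^-_Q(j^{+1}))}$ (the definition of $\sou^{*},\tar^{*}$), the goal becomes the vertex identity
\[
I(Q^{-1})\bas_i+\sum_{j<i}(I_j^{\tr}I_i)\,I(Q^{-1})\bas_j=I_i .
\]
The main tool is the telescoping identity $I(Q)u(\alpha)=\bas_{\sou(\alpha)}-\bas_{\tar(\alpha)}$, valid for every walk $\alpha=i_0^{\epsilon_0}\cdots i_{\ell}^{\epsilon_{\ell}}$ with $u(\alpha):=\sum_{t}\epsilon_t\bas_{i_t}$ (it is immediate from $\epsilon_t I_{i_t}=\bas_{v_{t-1}}-\bas_{v_t}$). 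This pins down the endpoints $\tar(\alpha^-_Q(j^{\pm1}))$ occurring above, and, since the arrows of $\alpha^-_Q(j^{\pm1})$ strictly decrease from $j$, makes the endpoints accessible by a finite descent.

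The heart of the proof---and the step I expect to be the main obstacle---is the displayed identity. Splitting the coefficient $I_j^{\tr}I_i$ according to whether $j$ meets $i$ at $\sou(i)$ or at $\tar(i)$, the sum decomposes into two local contributions
\[
\sum_{j<i}(I_j^{\tr}I_i)\,I(Q^{-1})\bas_j=\Sigma(\sou(i))-\Sigma(\tar(i)),\qquad \Sigma(v):=\sum_{\substack{j<i\\ v\in\MiNu(j)}}\sigma_{j,v}\,I(Q^{-1})\bas_j,
\]
where $\sigma_{j,v}=+1$ if $v=\sou(j)$ and $-1$ if $v=\tar(j)$. Now I would fix a vertex $v$, list the arrows incident to $v$ in decreasing order, and observe that the rule $i_{t+1}=\max Q_1^{<}(v,i_t)$ links them consecutively; using the compatibility relations of Remark~\ref{L(M):One} between minimally decreasing and minimally increasing walks, the endpoints $\tar(\alpha^-_Q(j^{\pm1}))$ appearing in $\Sigma(v)$ cancel in consecutive pairs, so that each $\Sigma(v)$ collapses to a single boundary term. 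Feeding these collapsed contributions back, together with $I(Q^{-1})\bas_i$, produces exactly $\bas_{\sou(i)}-\bas_{\tar(i)}=I_i$; an induction on $i$ (equivalently on the length of the structural walks) makes the cancellation rigorous.

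Granting $I(Q^{-1})=I(Q)\widecheck{G}_Q^{-1}$, the remaining claims are quick. First,
\[
I(Q^{-1})^{\tr}I(Q^{-1})=\widecheck{G}_Q^{-\tr}\big(\widecheck{G}_Q+\widecheck{G}_Q^{\tr}\big)\widecheck{G}_Q^{-1}=\widecheck{G}_Q^{-1}+\widecheck{G}_Q^{-\tr},
\]
and as $\widecheck{G}_Q^{-1}$ is upper triangular, uniqueness of the splitting of a symmetric matrix as $U+U^{\tr}$ with $U$ upper triangular gives $\widecheck{G}_{Q^{-1}}=\widecheck{G}_Q^{-1}$. Its diagonal is therefore all $1$'s; but the $i$-th diagonal entry equals $\tfrac12\|I(Q^{-1})\bas_i\|^2$ and $I(Q^{-1})\bas_i=\bas_a-\bas_b$ for some vertices $a,b$, so $\|I(Q^{-1})\bas_i\|^2=2$ forces $a\neq b$, i.e. $Q^{-1}$ is loop-less. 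Applying the incidence formula to the loop-less quiver $Q^{-1}$ and using $\widecheck{G}_{Q^{-1}}=\widecheck{G}_Q^{-1}$ yields
\[
I((Q^{-1})^{-1})=I(Q^{-1})\widecheck{G}_{Q^{-1}}^{-1}=I(Q)\widecheck{G}_Q^{-1}\widecheck{G}_Q=I(Q);
\]
since a loop-less quiver is recovered from its incidence matrix (each column $\bas_a-\bas_b$ fixes source $a$ and target $b$) and the orderings on $Q_0,Q_1$ are inherited, $(Q^{-1})^{-1}=Q$. Finally $\widecheck{G}_Q^{-1}$ is invertible, so $\mathrm{rank}\,I(Q^{-1})=\mathrm{rank}\,I(Q)$; as the rank of a vertex-arrow incidence matrix on $m$ vertices equals $m$ minus its number of connected components, $Q$ and $Q^{-1}$ have the same number of components, whence $Q$ is connected if and only if $Q^{-1}$ is.
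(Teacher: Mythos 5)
Your proof is correct, but note that there is nothing in this paper to compare it against: Proposition~\ref{INVERSE} is imported verbatim from~\cite{jaJ2020a} (Proposition~4.4) and never re-proved here, so your argument is a self-contained substitute rather than a variant of an existing proof. The heart of your argument, the column identity $I(Q^{-1})\bas_i+\sum_{j<i}(I_j^{\tr}I_i)\,I(Q^{-1})\bas_j=I_i$, does hold by exactly the telescoping you describe, and it is worth making the mechanism explicit: list $Q_1^{<}(v,i)=\{j_1>j_2>\cdots>j_k\}$ and let $T_v(j)$ be the endpoint of the right-complete minimally decreasing walk leaving $v$ through $j$; since a right-complete minimally decreasing walk is determined by its initial vertex and first arrow, the walk that enters $v$ through $j_t$ continues as the walk leaving $v$ through $j_{t+1}=\max Q_1^{<}(v,j_t)$, so the $j_t$-contribution to $\Sigma(v)$ equals $\bas_{T_v(j_{t+1})}-\bas_{T_v(j_t)}$ (with $\bas_v$ in place of the first term when $t=k$), whence $\Sigma(v)=\bas_v-\bas_{T_v(j_1)}$; combining this for $v=\sou(i)$ and $v=\tar(i)$ with $I(Q^{-1})\bas_i=\bas_{T_{\sou(i)}(\max Q_1^{<}(\sou(i),i))}-\bas_{T_{\tar(i)}(\max Q_1^{<}(\tar(i),i))}$ (reading $\bas_{\sou(i)}$, resp.\ $\bas_{\tar(i)}$, when the relevant set is empty) gives $\bas_{\sou(i)}-\bas_{\tar(i)}=I_i$ in every case, including parallel and antiparallel arrows, which your vertex-wise splitting of $I_j^{\tr}I_i$ handles correctly. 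Two small corrections to your narrative: no induction on $i$ is needed (the telescope is direct), and the fact you actually use is not Remark~\ref{L(M):One} (which compares decreasing with \emph{increasing} walks) but the more elementary uniqueness statement of Section~\ref{(S):Min} just quoted. The remaining deductions are all sound: unipotence of $\widecheck{G}_Q$ over $\Z$, the uniqueness of the splitting of a symmetric matrix with even diagonal as $U+U^{\tr}$ giving $\widecheck{G}_{Q^{-1}}=\widecheck{G}_Q^{-1}$, loop-lessness of $Q^{-1}$ from each column of $I(Q^{-1})$ having squared norm $2$, involutivity by applying the incidence formula to the (now known to be loop-less) quiver $Q^{-1}$, and connectivity via $\mathrm{rank}\,I(Q)=m-\#\{\text{components}\}$ together with invertibility of $\widecheck{G}_Q$. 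Note also that your formula $I(Q^{-1})\bas_j=\bas_{\tar(\alpha^-_Q(j^{-1}))}-\bas_{\tar(\alpha^-_Q(j^{+1}))}$ is valid before loop-lessness of $Q^{-1}$ is established (a loop would simply give a zero column), so the logical order of your steps is consistent.
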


Recall from~\cite[\S 4.1]{jaJ2018} that if $\alpha=i_{0}^{\epsilon_{0}}i_{1}^{\epsilon_{1}}\cdots i_{\ell}^{\epsilon_{\ell}}$ is a walk in a quiver $Q$ with $n$ arrows, then the \textbf{incidence vector} of $\alpha$ is the vector $\vdim(\alpha) \in \Z^n$ given by
\[
\vdim(\alpha)=\sum_{t=0}^{\ell} \epsilon_t \bas_{i_t},
\]
where $\bas_i$ denotes the canonical vector of $\Z^n$ corresponding to arrow $i$. We will need the following alternative description of the inverse of a quiver.

\begin{lemma}\label{L(M):Two}
Let $Q$ be a loop-less quiver with vertices $v_1,\ldots, v_m$, and let $Q^{-1}$ be its inverse quiver. Then
\[
I(Q^{-1})^{\tr}=\left[ \vdim[\alpha_Q^+(v_1)] | \ldots | \vdim[\alpha_Q^+(v_m)] \right].
\]
\end{lemma}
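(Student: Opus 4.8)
The plan is to prove the matrix identity column by column. The $v$-th column of the right-hand side is the incidence vector $\vdim[\alpha_Q^+(v)]$, while the $v$-th column of $I(Q^{-1})^{\tr}$ is the $v$-th \emph{row} of $I(Q^{-1})$. By Proposition~\ref{INVERSE} we have $I(Q^{-1})=I(Q)\widecheck{G}_Q^{-1}$, but rather than work with this factorization directly I would instead use the combinatorial definition of $Q^{-1}$ in terms of the walks $\alpha_Q^-(i^{\pm 1})$, since the statement to be proved is itself combinatorial. Concretely, the $(v,i)$-entry of $I(Q^{-1})$ is, by definition of an incidence matrix, equal to $+1$ if $v=\sou^*(i^*)$, equal to $-1$ if $v=\tar^*(i^*)$, and $0$ otherwise (using that $Q^{-1}$ is loop-less, so these cases are mutually exclusive). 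By the definition of the inverse quiver, this means the $(v,i)$-entry is $+1$ exactly when $v=\tar(\alpha^-_Q(i^{-1}))$, and $-1$ exactly when $v=\tar(\alpha^-_Q(i^{+1}))$.

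So the goal reduces to showing that the coefficient of $\bas_i$ in $\vdim[\alpha_Q^+(v)]$ equals this same sign pattern, for every vertex $v$ and arrow $i$. First I would unwind the definition of the incidence vector: the coefficient of $\bas_i$ in $\vdim[\alpha_Q^+(v)]$ is the signed count of how often arrow $i$ traversed in $\alpha_Q^+(v)$, where a positive traversal contributes $+1$ and a negative one $-1$. Here I need the fact (implicit in the structure of minimally increasing walks, and which I would note carefully) that in a structural walk no arrow is repeated, so this coefficient is $+1$ if $i$ occurs positively in $\alpha_Q^+(v)$, $-1$ if $i$ occurs negatively, and $0$ if $i$ does not occur. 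The identity then amounts to the pair of equivalences
\[
\text{$i$ occurs positively in $\alpha^+_Q(v)$} \iff v=\tar(\alpha^-_Q(i^{-1})),
\]
\[
\text{$i$ occurs negatively in $\alpha^+_Q(v)$} \iff v=\tar(\alpha^-_Q(i^{+1})).
\]

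These two equivalences are exactly what Remark~\ref{L(M):One} is engineered to supply. The forward directions are precisely Remark~\ref{L(M):One}(ii): if $i$ appears positively (resp.\ negatively) in $\alpha^+_Q(v)$, then $\tar(\alpha^-_Q(i^{-1}))=v$ (resp.\ $\tar(\alpha^-_Q(i^{+1}))=v$). For the reverse directions I would invoke Remark~\ref{L(M):One}(iii): if $v=\tar(\alpha^-_Q(i^{\pm 1}))$, then $\alpha^+_Q(v)=\alpha^-_Q(i^{\pm 1})^{-1}\gamma$, which writes $\alpha^+_Q(v)$ as a walk beginning with the reversal of $\alpha^-_Q(i^{\pm1})$; since that reversal begins with arrow $i$ in the appropriate orientation, $i$ indeed occurs in $\alpha^+_Q(v)$ with the correct sign. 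I would then combine the two equivalences, using that $Q^{-1}$ is loop-less (Proposition~\ref{INVERSE}) to rule out $i$ occurring both positively and negatively, i.e.\ to guarantee the three cases $+1$, $-1$, $0$ are mutually exclusive and exhaustive, matching the entries of $I(Q^{-1})$.

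\emph{The main obstacle} I anticipate is the bookkeeping around orientation conventions and the no-repetition claim for arrows in a structural walk. One must be scrupulous that the sign $\epsilon_t$ assigned to an arrow in a walk and the orientation $\pm 1$ used in the notation $\alpha^-_Q(i^{\pm1})$ line up correctly under reversal (where $\alpha^{-1}$ flips every sign), so that the reversal in Remark~\ref{L(M):One}(iii) delivers arrow $i$ with the sign demanded by the target equation rather than its negative. The rest is a direct translation between the definition of $Q^{-1}$ and the incidence-vector formalism, so beyond this sign-tracking the argument should be routine.
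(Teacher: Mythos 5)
Your proposal is correct and follows essentially the same route as the paper's own proof: a column-by-column comparison in which the forward implications come from Remark~\ref{L(M):One}$(ii)$ and the reverse ones from Remark~\ref{L(M):One}$(iii)$. The no-repetition fact you flag is immediate, since the arrow labels along $\alpha^+_Q(v)$ are strictly increasing by the definition of a minimally increasing walk, so each coefficient of $\vdim[\alpha^+_Q(v)]$ is indeed $+1$, $-1$ or $0$ as required.
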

\begin{proof}\vspace*{-2mm}
Let $b$ be the column of $I(Q^{-1})^{\tr}$ corresponding to vertex $v \in Q_0$. Hence
$b=\left[ \begin{matrix} b_1\\ \vdots \\ b_n \end{matrix}  \right]$, where\vspace*{-2mm}
\begin{equation*}
b_i = \left\{ \begin{array}{l l} +1, & \text{if $\sou^*(i^*)=v$},\\
-1, & \text{if $\tar^*(i^*)=v$},\\
0, & \text{if $v \notin \MiNu(i^*)$}.\end{array} \right.
\end{equation*}

On the other hand, take $\alpha_Q^+(v)=i_0^{\epsilon_0}i_1^{\epsilon_1}\cdots i_{\ell}^{\epsilon_{\ell}}=(v_{-1},i_0,v_{0},i_1,v_1,\ldots,v_{\ell-1},i_{\ell},v_{\ell})$ (hence $v_{-1}=v$). We prove that
\begin{itemize}
 \item[\emph{A)}] \textit{If $\epsilon_t=+1$ then $\sou^*(i_t^*)=v$, and if $\epsilon_t=-1$ then $\tar^*(i_t^*)=v$.}
 \item[\emph{B)}] \textit{If $v\in \MiNu(j^*)$ for some arrow $j^*$ in $Q^{-1}$, then $j=i_t$ for some $t \in \{0,\ldots,\ell \}$.}
\end{itemize}
\eject

To show claim $(A)$ assume first that $\epsilon_t=+1$, that is, $\sou(i_t)=v_{t-1}$ and $\tar(i_t)=v_t$. Then, by definition of $\sou^*$ and Remark~\ref{L(M):One}$(ii)$,
\[
\sou^*(i_t^*)=\tar(\alpha^-_Q(i_t^{-1}))=v_{-1}=v.
\]
Assume now that $\epsilon_t=-1$, that is, $\sou(i_t)=v_{t}$ and $\tar(i_t)=v_{t-1}$. Then, as before, we have $\tar^*(i_t^*)=\tar(\alpha^-_Q(i_t^{+1}))=v$.

\medskip
To show claim $(B)$ assume first that $\sou^*(j^*)=v$, that is,
\[
v=\tar(\alpha^-_Q(j^{-1})).
\]
By Remark~\ref{L(M):One}$(iii)$, there is a walk $\gamma$ such that $\alpha^+_Q(v)=\alpha^-_Q(j^{-1})^{-1}\gamma$. In particular, $j=i_t$ for some $t \in \{0,\ldots,\ell\}$. Assuming now that $\tar^*(j^*)=v$, then $v=\tar(\alpha^-_Q(j^{+1}))$, and we proceed analogously using Remark~\ref{L(M):One}$(iii)$.

\medskip
Finally, the identity $b=\vdim[\alpha^+_Q(v)]$ follows directly from $(A)$ and $(B)$.
\end{proof}

\section{Permutation of vertices determined by a quiver} \label{(S):One}

In this section we show the main technical result of the paper, Theorem~\ref{T(O):main}. The theorem introduces the Coxeter-Laplace matrix $\Lambda_Q$ of a loop-less quiver $Q$, and shows that it is a permutation matrix that can be obtained combinatorially from the structural walks of Section~\ref{(S):Min}. This construction yields one of the main definitions of the paper: the cycle type of a quiver.

\medskip
For a connected loop-less quiver $Q=(Q_0,Q_1)$, consider the function $\xi^-_Q:Q_0 \to Q_0$ given by,
\[
\xi^-_Q(v)=\tar(\alpha^-_Q(v)),
\]
and take similarly $\xi^+_Q(v)=\tar(\alpha^+_Q(v))$. Next we show that $\xi^-_Q$ is a permutation of $Q_0$, referred to as \textbf{permutation of vertices} associated to the quiver $Q$.

\begin{lemma} \label{R(M):inv}
For any loop-less quiver $Q$ and any vertex $v \in Q_0$ we have
\[
\xi^+_Q(\xi^-_Q(v))=v.
\]
In particular, $\xi^-_Q$ is invertible and $(\xi^-_Q)^{-1}=\xi^+_Q$.
\end{lemma}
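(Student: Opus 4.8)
The plan is to read off the composition identity $\xi^+_Q(\xi^-_Q(v))=v$ directly from Remark~\ref{L(M):One}$(i)$, and then to upgrade it to genuine invertibility using only the finiteness of $Q_0$.

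First I would fix a vertex $v \in Q_0$ and set $w := \xi^-_Q(v) = \tar(\alpha^-_Q(v))$, so that $w$ is by construction the target of the structural decreasing walk issuing from $v$. This is exactly the situation covered by Remark~\ref{L(M):One}$(i)$, which then supplies $\alpha^+_Q(w) = \alpha^-_Q(v)^{-1}$: the structural increasing walk starting at $w$ is the reversal of the structural decreasing walk starting at $v$.

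Next I would apply the target function to both sides. Since $\xi^+_Q(w) = \tar(\alpha^+_Q(w))$, it suffices to evaluate $\tar(\alpha^-_Q(v)^{-1})$. Invoking the elementary fact that reversal interchanges origin and target, namely $\tar(\beta^{-1}) = \sou(\beta)$ for every walk $\beta$, and recalling that $\alpha^-_Q(v)$ starts at $v$ (so $\sou(\alpha^-_Q(v)) = v$), I obtain $\tar(\alpha^-_Q(v)^{-1}) = v$. Hence $\xi^+_Q(\xi^-_Q(v)) = \xi^+_Q(w) = v$, as claimed.

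For the concluding clause, the relation $\xi^+_Q \circ \xi^-_Q = \mathrm{id}_{Q_0}$ exhibits a left inverse for $\xi^-_Q$ and so forces $\xi^-_Q$ to be injective; as $Q_0$ is finite, injectivity of a self-map already yields bijectivity, making $\xi^-_Q$ a permutation of $Q_0$. Composing the identity on the right with $(\xi^-_Q)^{-1}$ then gives $\xi^+_Q = (\xi^-_Q)^{-1}$. I do not anticipate a real obstacle here: the entire content is Remark~\ref{L(M):One}$(i)$, and the only places deserving care are the direction bookkeeping under walk reversal and the finiteness step needed to pass from a one-sided to a two-sided inverse.
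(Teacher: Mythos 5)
Your proposal is correct and follows exactly the paper's own proof: both invoke Remark~\ref{L(M):One}$(i)$ to get $\alpha^+_Q(\xi^-_Q(v))=\alpha^-_Q(v)^{-1}$ and then read off the target to conclude $\xi^+_Q(\xi^-_Q(v))=v$. Your only addition is making explicit the finiteness argument (an injective self-map of a finite set is bijective), which the paper leaves implicit in the phrase ``injective, hence invertible.''
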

\begin{proof}
Taking $w=\xi^-_Q(v)$, by Remark~\ref{L(M):One}$(i)$ we have $\alpha^+_Q(w)=\alpha^-_Q(v)^{-1}$, and
\[
\xi^+_Q(w)=\tar(\alpha^+_Q(w))=\tar(\alpha^-_Q(v)^{-1})=\sou(\alpha^-_Q(v))=v.
\]
In particular, $\xi^-_Q$ is injective, hence invertible with inverse $\xi^+_Q$.
\end{proof}

We need another preliminary observation.

\begin{remark} \label{R(M):inc}
For any loop-less quiver $Q$ with incidence matrix $I(Q)$, and any walk $\alpha$ in $Q$, we have
\[
I(Q)\vdim(\alpha)=\bas_{\sou(\alpha)}-\bas_{\tar(\alpha)}.
\]
\end{remark}
\begin{proof}
Clearly, the claim holds for trivial walks, and by definition of $I(Q)$ if $\alpha=i$ for some arrow $i$ (since $I(Q)=[I_1 |\ldots | I_n]$ where $I_i=\bas_{\sou(i)}-\bas_{\tar(i)} \in \Z^m$ and $\vdim(i)=\bas_i$). The claim holds similarly for $\alpha=i^{-1}$ (since $\vdim(i^{-1})=-\bas_{i}$).

\medskip
Now, for a concatenated walk $\alpha \beta$ we have $\vdim(\alpha \beta)=\vdim(\alpha)+\vdim(\beta)$, and therefore, by induction on the length of a walk,
\begin{eqnarray}
I(Q)\vdim(\alpha \beta) & = & I(Q)\vdim(\alpha)+I(Q)\vdim(\beta)=\bas_{\sou(\alpha)}-\bas_{\tar(\alpha)}+\bas_{\sou(\beta)}-\bas_{\tar(\beta)} \nonumber \\
& = & \bas_{\sou(\alpha)}-\bas_{\tar(\beta)}= \bas_{\sou(\alpha \beta)}-\bas_{\tar(\alpha \beta)}, \nonumber
\end{eqnarray}
since $\bas_{\tar(\alpha)}=\bas_{\sou(\beta)}$. This completes the proof.
\end{proof}

Let $Q$ be a loop-less quiver with incidence matrix $I(Q)$, inverse quiver $Q^{-1}$ and associated permutation of vertices $\xi^-_Q$. Denote by $\overline{Q}$ the underlying graph of $Q$. Let $\Inc(Q)$ be the \textbf{incidence bigraph} of $Q$ defined in~\cite[Definition~3.3]{jaJ2020a} (see also~\cite{jaJ2018}) as follows. The set of vertices $\Inc(Q)_0$ of $\Inc(Q)$ is the set of arrows of $Q$ (that is, $\Inc(Q)_0=Q_1$). The number of signed edges in $\Inc(Q)$ between vertices $i$ and $j$ is the cardinality of $\MiNu(i)\cap \MiNu(j)$. The sign of such arrows is $-1$ if $ij$ or $ji$ is a walk in $Q$, and it is $+1$ if $ij^{-1}$ or $i^{-1}j$ is a walk of $Q$. For a bigraph $\Delta$ denote by $\Adj(\Delta)$ the \textbf{upper triangular adjacency matrix} of $\Delta$ (resp. by $\SAdj(\Delta)=\Adj(\Delta)+\Adj(\Delta)^{\tr}$ the \textbf{symmetric adjacency matrix} of $\Delta$), and by $D_{\Delta}$  the diagonal matrix of degrees of $\Delta$.

\begin{theorem} \label{T(O):main}
Let $Q$ be a connected loop-less quiver with $m$ vertices and $n$ arrows. Then the following identities hold:
\begin{eqnarray}
G_Q:=I(Q)^{\tr}I(Q)&=&2\Id_n-\SAdj(\Inc(Q)), \nonumber \\
L_Q:=I(Q)I(Q)^{\tr}&=&D_{\overline{Q}}-\SAdj(\overline{Q}), \nonumber \\
\Cox_Q:=\Id_n-I(Q)^{\tr}I(Q^{-1})&=&-\widecheck{G}^{\tr}_Q\widecheck{G}^{-1}_Q,\nonumber \\
\LCox_Q:=\Id_m-I(Q^{-1})I(Q)^{\tr}&=&P(\xi^-_Q),\nonumber
\end{eqnarray}
where for a permutation $\rho$, the matrix $P(\rho)$ has as $i$-th column the canonical vector $\bas_{\rho(i)}$. Moreover,
\begin{itemize}
 \item[i)] The \textbf{Gram matrix} $G_Q$ of $Q$ has Dynkin type $\A_{m-1}$ and corank $n-m+1$, and every such Gram matrix can be obtained in this way.
 \item[ii)] The \textbf{Laplace matrix} $L_Q$ of $Q$ has corank one, with null space generated by the vector $\mathbbm{1}$ having all entries equal to $1$.
\end{itemize}
\end{theorem}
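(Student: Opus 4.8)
The plan is to establish the four matrix identities one at a time and then read off statements (i) and (ii) from the Laplacian identity together with the quiver realization of~\cite{jaJ2018}. The first two identities should fall out of a direct entry-wise computation, the third from the relation $I(Q^{-1})=I(Q)\widecheck{G}_Q^{-1}$ of Proposition~\ref{INVERSE}, and the fourth (the crucial one) by feeding Lemma~\ref{L(M):Two} into Remark~\ref{R(M):inc}.

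For $G_Q=I(Q)^{\tr}I(Q)$ I would compute the $(i,j)$ entry as the inner product $I_i^{\tr}I_j$ of columns of $I(Q)$, with $I_i=\bas_{\sou(i)}-\bas_{\tar(i)}$. On the diagonal this gives $I_i^{\tr}I_i=2$ since $Q$ is loop-less (so $\sou(i)\neq\tar(i)$); off the diagonal it records, with signs, the common incident vertices $\MiNu(i)\cap\MiNu(j)$, which by the definition of the incidence bigraph $\Inc(Q)$ and its symmetric adjacency matrix is exactly $-\SAdj(\Inc(Q))_{i,j}$. Dually, for $L_Q=I(Q)I(Q)^{\tr}$ I would compute the $(v,w)$ entry as a sum over arrows of $(I_i)_v(I_i)_w$, obtaining the degree of $v$ in $\overline{Q}$ on the diagonal and minus the number of edges joining $v$ and $w$ off it, i.e.\ $D_{\overline{Q}}-\SAdj(\overline{Q})$; this is the classical oriented-incidence/Laplacian identity.

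For the Coxeter identity, substituting $I(Q^{-1})=I(Q)\widecheck{G}_Q^{-1}$ gives $I(Q)^{\tr}I(Q^{-1})=G_Q\widecheck{G}_Q^{-1}$, and since $G_Q=\widecheck{G}_Q+\widecheck{G}_Q^{\tr}$ by the definition of the triangular Gram matrix, this equals $\Id_n+\widecheck{G}_Q^{\tr}\widecheck{G}_Q^{-1}$; subtracting from $\Id_n$ yields $-\widecheck{G}_Q^{\tr}\widecheck{G}_Q^{-1}$, matching the definition of $\Cox_Q$. The Coxeter--Laplace identity is the heart of the theorem. Here I would use Lemma~\ref{L(M):Two}, which identifies the $v$-th row of $I(Q^{-1})$ with $\vdim[\alpha_Q^+(v)]^{\tr}$, so that the $(v,w)$ entry of $I(Q^{-1})I(Q)^{\tr}$ equals the $w$-th coordinate of $I(Q)\vdim[\alpha_Q^+(v)]$. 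By Remark~\ref{R(M):inc} this vector is $\bas_{\sou(\alpha_Q^+(v))}-\bas_{\tar(\alpha_Q^+(v))}=\bas_v-\bas_{\xi^+_Q(v)}$, so each row of $I(Q^{-1})I(Q)^{\tr}$ has the shape $\bas_v^{\tr}-\bas_{\xi^+_Q(v)}^{\tr}$. Hence $\Id_m-I(Q^{-1})I(Q)^{\tr}$ is the matrix whose $(v,w)$ entry equals $1$ exactly when $w=\xi^+_Q(v)$; since Lemma~\ref{R(M):inv} gives $(\xi^+_Q)^{-1}=\xi^-_Q$, the condition $w=\xi^+_Q(v)$ is equivalent to $v=\xi^-_Q(w)$, and the matrix is precisely $P(\xi^-_Q)$.

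Finally, (i) and (ii) follow from the Laplacian identity. Because every column of $I(Q)$ sums to zero, $\mathbbm{1}$ lies in the kernel of $I(Q)^{\tr}$, hence of $L_Q$; conversely $L_Qx=0$ forces $x^{\tr}L_Qx=\|I(Q)^{\tr}x\|^2=0$, so $\ker L_Q=\ker I(Q)^{\tr}$ consists of the vectors constant on the connected components of $\overline{Q}$, which for connected $Q$ is exactly $\langle\mathbbm{1}\rangle$, giving (ii). The same computation yields $\operatorname{rank}G_Q=\operatorname{rank}I(Q)=m-1$, whence $\CRnk(G_Q)=n-m+1$; the assertions that $G_Q$ has Dynkin type $\A_{m-1}$ and that every Gram matrix of this type arises from such a quiver are the quiver realization of $\Quad_{\A}^c(n)$ established in~\cite{jaJ2018} (note $q_Q(x)=x^{\tr}\widecheck{G}_Qx$, so $G_Q$ is the symmetric Gram matrix of $q_Q$). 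I expect the main obstacle to be the fourth identity, specifically keeping the row/column indexing and the distinction between $\xi^+_Q$ and $\xi^-_Q$ straight, so that one lands exactly on $P(\xi^-_Q)$ rather than on its transpose.
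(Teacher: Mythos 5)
Your proposal is correct, and for three of the four identities (plus claims (i) and (ii)) it runs along essentially the same lines as the paper: the Laplacian identity by entrywise computation, the Coxeter identity by substituting $I(Q^{-1})=I(Q)\widecheck{G}_Q^{-1}$ and $G_Q=\widecheck{G}_Q+\widecheck{G}_Q^{\tr}$, claim (ii) from $\ker L_Q=\ker I(Q)^{\tr}=\Z\mathbbm{1}$ by connectivity (you make explicit the step $L_Qx=0\Rightarrow\|I(Q)^{\tr}x\|^2=0$ that the paper leaves implicit), and the Dynkin-type assertions by citation to~\cite{jaJ2018}, which is exactly what the paper does (it also cites prior work for $G_Q=2\Id_n-\SAdj(\Inc(Q))$, where you compute directly). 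The genuine difference is in the key identity $\LCox_Q=P(\xi^-_Q)$. The paper computes the \emph{columns} $\LCox_Q\bas_v$, which forces it to apply Lemma~\ref{L(M):Two} to the quiver $Q^{-1}$ (via $(Q^{-1})^{-1}=Q$ from Proposition~\ref{INVERSE}), and therefore lands on $\LCox_Q=P(\xi^+_{Q^{-1}})$; to convert $\xi^+_{Q^{-1}}$ into $\xi^-_Q$ it must then verify the auxiliary matrix identity $\LCox_Q\LCox_{Q^{-1}}=\Id$ (a four-line computation) before Lemma~\ref{R(M):inv} can finish. You instead compute the \emph{rows} of $I(Q^{-1})I(Q)^{\tr}$, applying Lemma~\ref{L(M):Two} to $Q$ itself, so Remark~\ref{R(M):inc} gives row $v$ equal to $(\bas_v-\bas_{\xi^+_Q(v)})^{\tr}$, hence $(\LCox_Q)_{v,w}=\delta_{w,\xi^+_Q(v)}$; a single appeal to Lemma~\ref{R(M):inv} (the equivalence $w=\xi^+_Q(v)\iff v=\xi^-_Q(w)$) then identifies this matrix with $P(\xi^-_Q)$, whose $(v,w)$ entry is $\delta_{v,\xi^-_Q(w)}$. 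Your route is a genuine shortening: it dispenses with both the use of $(Q^{-1})^{-1}=Q$ in this step and the identity $\LCox_Q\LCox_{Q^{-1}}=\Id$, which the paper proves only for this purpose and whose byproduct ($\xi^+_{Q^{-1}}=\xi^-_Q$, i.e.\ the permutation of the inverse quiver is the inverse permutation) is not needed elsewhere; and your row/column bookkeeping is consistent with the paper's convention $P(\rho)\bas_v=\bas_{\rho(v)}$, so you do land on $P(\xi^-_Q)$ rather than its transpose, which was indeed the only real pitfall.
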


Since the matrix $\Cox_Q$ is the Coxeter-Gram matrix of the quadratic form $q_Q$, we refer to the matrix $\Lambda_Q$ as the \textbf{Coxeter-Laplace} matrix of $Q$.

\begin{proof}
The expression for $G_Q$ and claim $(i)$ were shown in~\cite{jaJ2018} (see also~\cite[Lemma~3.4 and Corollary~3.6]{jaJ2020a}).

\medskip
For a vertex $v \in Q_0$, consider the $v$-th column $b^v=I(Q)^{\tr}\bas_v$ of the matrix $I(Q)^{\tr}$. By definition, the entries of $b^v$ are indexed by the arrows of $Q$, and are given by
\begin{equation*}
b^v_i = \left\{ \begin{array}{l l} +1, & \text{if $\sou(i)=v$},\\
-1, & \text{if $\tar(i)=v$},\\
0, & \text{otherwise,}\end{array} \right.
\end{equation*}
since $Q$ has no loop. Then $(b^v)^{\tr}b^v=\sum_{i \in Q_1} (b^v_i)^2$ is precisely the number of arrows $i$ such that $\sou(i)=v$ or $\tar(i)=v$, that is, the degree of $v$ as a vertex in the underlying graph $\overline{Q}$ of $Q$. Moreover, for vertices $v \neq v'$ and an arrow $i \in Q_1$, we have $b^v_ib^{v'}_i=-1$ if $i$ joins vertices $v$ and $v'$ (in any direction), and $b^v_ib^{v'}_i=0$ otherwise. Then $-(b^v)^{\tr}b^{v'}=\sum_{i \in Q_1} -b^v_ib^{v'}_i$ is the number of edges in $\overline{Q}$ joining vertices $v$ and $v'$. Therefore, the identity
\[
I(Q)I(Q)^{\tr}=D_{\overline{Q}}-\SAdj(\overline{Q}),
\]
holds. To show $(ii)$, recall that the $i$-th row of $I(Q)^{\tr}$ is $\bas_{\sou(i)}-\bas_{\tar(i)}$, which implies that $I(Q)^{\tr}\mathbbm{1}=0$. Assume now that $Q$ is connected, and that $x \in \Z^m$ is a non-zero vector such that $I(Q)^{\tr}x=0$. Then $x_v=x_{v'}$ for any vertices $v$, $v'$ joint by an arrow in $Q$, thus, the connectivity of $Q$ implies that the vector $x$ is an integer multiple of $\mathbbm{1}$. This shows claim $(ii)$, since the null space of $L_Q=I(Q)I(Q)^{\tr}$ is the right null space of $I(Q)^{\tr}$.

\medskip
As shown in~\cite[Theorem~4.7]{jaJ2020a}, using Proposition~\ref{INVERSE} we have
\[
\Cox_Q=\Id_n-I(Q)^{\tr}I(Q^{-1})=\Id_n-I(Q)^{\tr}I(Q)\widecheck{G}_Q^{-1}=\Id_n-(\widecheck{G}_Q+\widecheck{G}_Q^{\tr})\widecheck{G}_Q^{-1}=-\widecheck{G}_Q^{\tr}\widecheck{G}_Q^{-1},
\]
since $I(Q)^{\tr}I(Q)=G_Q=\widecheck{G}_Q+\widecheck{G}_Q^{\tr}$. It remains to show that $\LCox_Q=P(\xi^-_Q)$.

\medskip
For any vertex $v \in Q_0$, Lemma~\ref{L(M):Two} and Remark~\ref{R(M):inc} yield
\begin{eqnarray}
\LCox_Q\bas_v & = & [\Id-I(Q^{-1})I(Q)^{\tr}]\bas_v=\bas_v-I(Q^{-1})\vdim[\alpha^+_{Q^{-1}}(v)] \nonumber \\
& = & \bas_{v}-[\bas_{\sou(\alpha^+_{Q^{-1}}(v))}-\bas_{\tar(\alpha^+_{Q^{-1}}(v))}] = \bas_{\xi^+_{Q^{-1}}(v)}, \nonumber
\end{eqnarray}
since $\sou(\alpha^+_{Q^{-1}}(v))=v$ and $\tar(\alpha^+_{Q^{-1}}(v))=\xi^+_{Q^{-1}}(v)$. This shows that $\LCox_Q=P(\xi_{Q^{-1}}^+)$.

\medskip
Using the identity $I(Q^{-1})=I(Q)\widecheck{G}_Q^{-1}$ from Proposition~\ref{INVERSE}, observe also that
\begin{eqnarray}
\LCox_Q\LCox_{Q^{-1}}& = & [\Id-I(Q^{-1})I(Q)^{\tr}][\Id-I(Q)I(Q^{-1})^{\tr}] \nonumber \\
& = & \Id-I(Q^{-1})I(Q)^{\tr}-I(Q)I(Q^{-1})^{\tr}+I(Q^{-1})I(Q)^{\tr}I(Q)I(Q^{-1})^{\tr} \nonumber \\
& = & \Id-I(Q^{-1})\widecheck{G}_Q^{\tr}I(Q^{-1})^{\tr}-I(Q^{-1})\widecheck{G}_QI(Q^{-1})^{\tr}+I(Q^{-1})G_QI(Q^{-1})^{\tr} \nonumber \\
& = & \Id-I(Q^{-1})\left[\widecheck{G}^{\tr}_Q+\widecheck{G}_Q-G_Q \right]I(Q^{-1})^{\tr}=\Id, \nonumber
\end{eqnarray}
that is, $(\xi^+_{Q})^{-1}=\xi^+_{Q^{-1}}$. By Lemma~\ref{R(M):inv} we get
\[
\LCox_Q=P(\xi^+_{Q^{-1}})=P((\xi^+_{Q})^{-1})=P(\xi^-_Q),
\]
which completes the proof.
\end{proof}

\section{Cycle type, restrictions and transpositions} \label{(S):Two}

This technical section presents some simple quiver constructions to obtain prescribed permutations of vertices. The main result, Proposition~\ref{P(T):walk}, describes the possible permutations obtained among connected loop-less quivers of a given number of vertices and arrows.

Let $\alpha$ be a walk in a loop-less quiver $Q$. Denote by $Q[\alpha]$ the quiver obtained from $Q$ by adding an arrow from $\sou(\alpha)$ to $\tar(\alpha)$, placed last in the total ordering in the arrows $Q_1$ of $Q$. For two vertices $v \neq w$ in $Q$, denote by $[v,w]$ the permutation of $Q_0$ that swaps vertices $v$ and $w$ (called \textbf{transposition} of $v$ and $w$).

\begin{lemma} \label{L(T):res}
Let $Q$ be a loop-less quiver, and take $Q'=Q^{(i)}$ the quiver obtained from $Q$ by removing the maximal arrow $i$ of $Q_1$. Then
\[
\xi^-_Q=\xi^-_{Q'}[\sou(i),\tar(i)].
\]
\end{lemma}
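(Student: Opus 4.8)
The plan is to compare the two permutations vertex by vertex, writing $s=\sou(i)$ and $t=\tar(i)$. The single structural fact driving everything is that $i$ is the \emph{maximal} arrow of $Q_1$: for any vertex $w$ and arrow $j$ the set $Q_1^<(w,j)=\{k : w\in\MiNu(k),\ k<j\}$ can never contain $i$, since $k\le i$ always holds. Hence $i$ is never selected by the continuation rule $i_{t+1}=\max Q_1^<(v_t,i_t)$, so $i$ can occur in a structural decreasing walk $\alpha^-_Q(v)$ only as its very first arrow $i_0=\max Q_1(v)$; and because $i$ is the global maximum, this happens precisely when $v\in\{s,t\}$ (equivalently $v\in\MiNu(i)$). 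A second, equally important consequence is that deleting $i$ leaves every set $Q_1^<(w,j)$ with $j\ne i$ unchanged, so the continuation rule of a minimally decreasing walk is literally the same in $Q$ and in $Q'=Q^{(i)}$ as long as the walk does not begin with $i$.

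First I would treat $v\notin\{s,t\}$. Here $\max Q_1(v)$ is an arrow $\ne i$ and is the same in $Q$ and $Q'$, and by the observations above the whole walk $\alpha^-_Q(v)$ avoids $i$ and is computed by identical rules in both quivers; thus $\alpha^-_Q(v)=\alpha^-_{Q'}(v)$ and $\xi^-_Q(v)=\xi^-_{Q'}(v)$. Since $[s,t]$ fixes $v$, this already gives $\xi^-_Q(v)=(\xi^-_{Q'}[s,t])(v)$.

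Next I would treat $v=s$ (the case $v=t$ being symmetric). Since $i=\max Q_1(s)$ and $s=\sou(i)$, the walk $\alpha^-_Q(s)=\alpha^-_Q(i^{+1})$ starts with the arrow $i$ in positive orientation, from $s$ to $t$, and then continues as the right complete minimally decreasing walk issuing from $t$ whose first arrow is $\max Q_1^<(t,i)$. Because $i$ is maximal, $\max Q_1^<(t,i)=\max(Q_1(t)\setminus\{i\})=\max Q'_1(t)$, which is exactly the first arrow of $\alpha^-_{Q'}(t)$, with matching orientation; and since the continuation rules coincide in $Q$ and $Q'$, this tail equals $\alpha^-_{Q'}(t)$ verbatim. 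Therefore $\xi^-_Q(s)=\tar(\alpha^-_Q(s))=\tar(\alpha^-_{Q'}(t))=\xi^-_{Q'}(t)=(\xi^-_{Q'}[s,t])(s)$, as desired.

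The main obstacle I anticipate is making the ``tail equals $\alpha^-_{Q'}(t)$'' step fully rigorous, in particular confirming that right-completeness transfers between $Q$ and $Q'$ and handling the degenerate case in which $t$ (or $s$) becomes isolated in $Q'$, so that $Q'_1(t)=\emptyset$ and $\alpha^-_{Q'}(t)$ is the trivial walk at $t$; then $Q_1^<(t,i)=\emptyset$ forces $\alpha^-_Q(s)$ to stop immediately after $i$ at $t$, and both sides still give $t=\xi^-_{Q'}(t)$. Collecting the three cases yields $\xi^-_Q=\xi^-_{Q'}[\sou(i),\tar(i)]$.
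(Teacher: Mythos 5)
Your proof is correct and takes essentially the same route as the paper's: the paper also argues vertex by vertex, showing $\alpha^-_Q(v)=\alpha^-_{Q'}(v)$ for $v\notin\MiNu(i)$ by maximality of $i$, and recording the identities $\alpha^-_{Q}(\sou(i))=i\,\alpha^-_{Q'}(\tar(i))$ and $\alpha^-_{Q}(\tar(i))=i^{-1}\alpha^-_{Q'}(\sou(i))$ for the two incident vertices. Your write-up simply makes explicit what the paper leaves as an ``Observe that'' step — namely that the continuation rule can never select the maximal arrow $i$, that the sets $Q_1^{<}(w,j)$ agree in $Q$ and $Q'$ for $j\neq i$, and the degenerate case of an isolated endpoint.
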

\begin{proof}
Let $v$ be a vertex of $Q$ with $v \notin \MiNu(i)$. Then $Q_1(v)=Q'_1(v)$ and $i > \max Q_1(v)$, by maximality of $i$. Hence $\alpha^-_Q(v)=\alpha^-_{Q'}(v)$, that is, $\xi^-_Q(v)=\xi^-_{Q'}(v)$. Observe that $\alpha^-_{Q}(\sou(i))=i\alpha^-_{Q'}(\tar(i))$ and that $\alpha^-_{Q}(\tar(i))=i^{-1}\alpha^-_{Q'}(\sou(i))$. Hence $\xi^-_{Q}(\sou(i))=\xi^-_{Q'}(\tar(i))$ and $\xi^-_{Q}(\tar(i))=\xi^-_{Q'}(\sou(i))$, which shows the claim.
\end{proof}

Let $\rho$ be a permutation of a finite set $Q_0$. The \textbf{cycle type} $\ct(\rho)$ of $\rho$ is the (non-increasing) sequence of cardinalities of the orbits of $\rho$. The cycle type $\ct(\rho)$ of $\rho$ is a partition of the integer $|Q_0|$.

\begin{definition} \label{D(O):ct}
For a connected loop-less quiver $Q$, define the \textbf{cycle type} $\ct(Q)$ of $Q$ as the cycle type of the permutation of vertices $\xi^-_Q$ determined by $Q$,
\[
\ct(Q):=\ct(\xi^-_Q).
\]
\end{definition}

By \textbf{Coxeter polynomial} of a loop-less quiver $Q$ we mean the characteristic polynomial $\cox_Q$ of the Coxeter matrix $\Cox_Q$ of $Q$. Recall that if $\chr_{M}(\va)$ denotes the characteristic polynomial of a square matrix $M$, and that if $A$ and $B$ are $m \times n$ and $n \times m$ matrices respectively, then
\[
\chr_{BA}(\va)=\va^{n-m}\chr_{AB}(\va),
\]
see for instance~\cite[\S 2.4]{fZ99}. As a consequence of Theorem~\ref{T(O):main}, we get the following particular description of corresponding Coxeter polynomials.

\begin{corollary} \label{C:pol}
Let $Q$ be a connected loop-less quiver. Then the Coxeter polynomial $\cox_Q$ of $Q$ is given by
\[
\cox_{Q}(\va)=(\va-1)^{c-1}\chr_{\ct(Q)}(\va),
\]
where $\chr_{\ct(Q)}(\va)=\prod \limits_{a=1}^{\ell(\pi)}(\va^{\pi_a}-1)$ if $\ct(Q)=(\pi_1,\ldots,\pi_{\ell(\pi)})$.
\end{corollary}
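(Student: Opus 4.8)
The plan is to compute $\cox_Q(\va)=\chr_{\Cox_Q}(\va)$ by reducing from the $n\times n$ matrix $\Cox_Q$ to the $m\times m$ matrix $\LCox_Q$, exploiting that both are built from the incidence matrices $I(Q)$ and $I(Q^{-1})$ via the same two factors in opposite order. Concretely, I set $A=I(Q^{-1})$ (an $m\times n$ matrix) and $B=I(Q)^{\tr}$ (an $n\times m$ matrix), so that by Theorem~\ref{T(O):main} one has $\Cox_Q=\Id_n-BA$ and $\LCox_Q=\Id_m-AB$. The essential input is the determinant identity $\chr_{BA}(\va)=\va^{n-m}\chr_{AB}(\va)$ recalled just before the statement, which will account for the factor $(\va-1)^{c-1}$.

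First I would rewrite $\chr_{\Cox_Q}(\va)=\det((\va-1)\Id_n+BA)$ and, factoring out a sign, express it as $(-1)^n\chr_{BA}(1-\va)$; similarly $\chr_{\LCox_Q}(\va)=(-1)^m\chr_{AB}(1-\va)$. Substituting $\mu=1-\va$ into $\chr_{BA}(\mu)=\mu^{n-m}\chr_{AB}(\mu)$ then gives $\chr_{BA}(1-\va)=(1-\va)^{n-m}\chr_{AB}(1-\va)$. Chaining these three relations and writing $(1-\va)^{n-m}=(-1)^{n-m}(\va-1)^{n-m}$, the accumulated sign has exponent $n+(n-m)+m=2n$, so all signs cancel and I obtain the clean relation $\chr_{\Cox_Q}(\va)=(\va-1)^{n-m}\chr_{\LCox_Q}(\va)$.

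To finish, I invoke the two facts established earlier. Since $G_Q$ has corank $c=n-m+1$ by Theorem~\ref{T(O):main}$(i)$, we have $n-m=c-1$, which produces the exponent $c-1$. Moreover, since $\LCox_Q=P(\xi^-_Q)$ is the permutation matrix of $\xi^-_Q$, whose cycle type is $\ct(Q)=\ct(\xi^-_Q)$ by Definition~\ref{D(O):ct}, the computation of characteristic polynomials of permutation matrices given in the introduction yields $\chr_{\LCox_Q}(\va)=\chr_{\ct(Q)}(\va)=\prod_{a}(\va^{\pi_a}-1)$. Combining these gives $\cox_Q(\va)=(\va-1)^{c-1}\chr_{\ct(Q)}(\va)$, as required.

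I expect the only delicate point to be the sign and substitution bookkeeping in the middle step, namely tracking the $(-1)^n$, $(-1)^m$ and $(-1)^{n-m}$ factors through the affine change $\mu=1-\va$ and confirming that the total exponent $2n$ makes them disappear, leaving exactly $(\va-1)^{n-m}$. Conceptually this merely reflects that $BA$ and $AB$ share all their nonzero eigenvalues, so $\Cox_Q$ and $\LCox_Q$ share all eigenvalues other than $1$, with $\Cox_Q$ carrying $n-m$ extra eigenvalues equal to $1$; the determinant identity makes this rigorous without any diagonalizability assumption.
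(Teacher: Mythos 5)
Your proof is correct and follows essentially the same route as the paper: both reduce $\chr_{\Cox_Q}$ to $\chr_{\LCox_Q}$ via the identity $\chr_{BA}(\va)=\va^{n-m}\chr_{AB}(\va)$ applied to $A=I(Q^{-1})$, $B=I(Q)^{\tr}$, then use $n-m=c-1$ and $\LCox_Q=P(\xi^-_Q)$ together with the characteristic polynomial of a permutation matrix. The only cosmetic difference is that the paper absorbs the signs by writing $\chr_{\Cox_Q}(\va)=\chr_{-I(Q)^{\tr}I(Q^{-1})}(\va-1)$ rather than tracking the factors $(-1)^n$, $(-1)^m$, $(-1)^{n-m}$ explicitly as you do; your bookkeeping (total exponent $2n$) is accurate.
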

\begin{proof}
Using Theorem~\ref{T(O):main}, we have
\begin{eqnarray}
\cox_{Q}(\va)& = & \chr_{\Cox_Q}(\va)=\chr_{-I(Q)^{\tr}I(Q^{-1})}(\va-1) \nonumber \\
& = &(\va-1)^{n-m} \chr_{-I(Q^{-1})I(Q)^{\tr}}(\va-1)=(\va-1)^{n-m}\chr_{\LCox_Q}(\va) \nonumber \\
& = & (\va-1)^{c-1} \chr_{P(\xi^-_Q)}(\va), \nonumber
\end{eqnarray}
since $c-1=n-m$. The characteristic polynomial of permutation matrices is well known (cf.~\cite[\S 5.6]{fZ99}),
\[
\chr_{P(\xi^-_Q)}(\va)=\prod_{t=1}^{\ell(\pi)}(\va^{\pi_t}-1),
\]
where $\ct(\xi^-_Q)=\pi=(\pi_1,\ldots,\pi_{\ell(\pi)})$. This shows that $\cox_{Q}(\va)=(\va-1)^{c-1}\chr_{\ct(Q)}(\va)$, which completes the proof.
\end{proof}

\medskip
As alternative factorization of the Coxeter polynomial of $Q$, consider the polynomial $\nu_k(\va)=\va^{k-1}+\va^{k-2}+\ldots +\va+1$ for $k \geq 1$. Then $\va^k-1=(\va-1)\nu_k(\va)$, and
\begin{equation}\label{EQ:2}
\cox_Q(\va)=(\va-1)^{c+(\ell-1)}\prod_{t=1}^{\ell}\nu_{\pi_t}(\va),
\end{equation}
where $\ct(Q)=(\pi_1,\ldots,\pi_{\ell})$ and $(\ell-1)\geq 0$.

\begin{lemma}\label{L:tree}
Let $Q$ be a tree quiver. Then $\xi^-_Q$ is a cyclic permutation.
\end{lemma}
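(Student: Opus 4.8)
The plan is to induct on the number of vertices $m$ of the tree quiver $Q$, peeling off the maximal arrow and invoking Lemma~\ref{L(T):res}. The base case $m=1$ is immediate: then $Q$ has one vertex and no arrows, so $\xi^-_Q$ is the identity, i.e.\ a (trivial) cyclic permutation. For the inductive step with $m \geq 2$, I would let $i$ be the maximal arrow and set $Q'=Q^{(i)}$. Since every edge of a tree is a bridge, removing $i$ splits $\overline{Q}$ into exactly two subtrees $T_1$ and $T_2$, one containing $\sou(i)$ and the other containing $\tar(i)$ (say $\sou(i)\in T_1$ and $\tar(i)\in T_2$); both are nonempty, so each has strictly fewer than $m$ vertices and the induction hypothesis applies to each of them. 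Lemma~\ref{L(T):res} then gives $\xi^-_Q=\xi^-_{Q'}[\sou(i),\tar(i)]$, so it suffices to understand $\xi^-_{Q'}$ and the effect of the transposition.

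The first real step is to show that $\xi^-_{Q'}$ splits as the disjoint product $\xi^-_{T_1}\,\xi^-_{T_2}$. The point is that the structural decreasing walk $\alpha^-_{Q'}(v)$ starting at a vertex $v$ of $T_j$ never leaves $T_j$, because a walk cannot cross the removed bridge, and because the arrows incident to any vertex of $T_j$ (together with their induced ordering) are the same in $Q'$ as in $T_j$; since a structural walk only ever consults arrows incident to its current vertex, one gets $\alpha^-_{Q'}(v)=\alpha^-_{T_j}(v)$ and hence $\xi^-_{Q'}\!\mid_{T_j}=\xi^-_{T_j}$. By the induction hypothesis each $\xi^-_{T_j}$ is a single cycle on the vertices of $T_j$, so $\xi^-_{Q'}$ is a product of exactly two disjoint cycles. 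Note that Lemma~\ref{L(T):res}, and indeed the definition of $\xi^-$, apply to arbitrary loop-less quivers, so using them on the disconnected $Q'$ is legitimate.

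Finally I would combine this with the elementary fact that multiplying a permutation by a transposition whose two points lie in different cycles merges those cycles into one. Concretely, writing $\xi^-_{T_1}=(a_1\cdots a_p)$ and $\xi^-_{T_2}=(b_1\cdots b_q)$ as disjoint cycles with $a_1=\sou(i)$ and $b_1=\tar(i)$, a direct orbit computation shows that $\xi^-_{T_1}\xi^-_{T_2}[a_1,b_1]$ sends $a_1\mapsto b_2\mapsto\cdots\mapsto b_q\mapsto b_1\mapsto a_2\mapsto\cdots\mapsto a_p\mapsto a_1$, a single cycle through all $p+q$ vertices. Since $\sou(i)$ and $\tar(i)$ indeed lie in the two distinct cycles, this proves $\xi^-_Q$ is a single $m$-cycle and closes the induction.

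The only genuinely delicate point is the decomposition step: justifying that $\xi^-_{Q'}$ respects the splitting of the disconnected quiver $Q'$ into its two tree components. Everything else is either the ready-made Lemma~\ref{L(T):res} or the short transposition-merges-cycles computation. I would therefore take care to read the structural-walk definitions straight from Section~\ref{(S):Min} so that their \emph{locality}---confinement to a single connected component---is manifest, which is what makes the restriction identities $\xi^-_{Q'}\!\mid_{T_j}=\xi^-_{T_j}$ transparent.
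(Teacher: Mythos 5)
Your proof is correct, but it follows a genuinely different route from the paper's. The paper argues spectrally: it computes $\xi^-_L$ explicitly for the linear quiver $L$ (a single $m$-cycle), invokes the strong Gram congruence $q_Q \approx q_L$ for an arbitrary tree quiver $Q$ (citing Corollary~3.11 and Proposition~3.13 of~\cite{jaJ2020a}) to conclude $\cox_Q(\va)=\cox_L(\va)=\nu_m(\va)$, and then uses the factorization of equation~(\ref{EQ:2}) (hence Corollary~\ref{C:pol} and Theorem~\ref{T(O):main}) to see that if $\xi^-_Q$ had more than one cycle, then $1$ would be a root of $\cox_Q$, contradicting $\nu_m(1)=m\neq 0$. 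You instead work purely combinatorially: induction on the number of vertices, peeling off the maximal arrow via Lemma~\ref{L(T):res}, observing that the resulting disconnected quiver splits $\xi^-$ into the two subtrees' permutations (the locality of structural walks and of the induced arrow ordering, which you rightly flag as the one point needing care), and then merging the two cycles with the transposition $[\sou(i),\tar(i)]$ --- your orbit computation is right, and the definition of $\xi^-$ and Lemma~\ref{L(T):res} do extend to disconnected loop-less quivers, exactly as the paper itself exploits in Case~2 of the proof of Proposition~\ref{P(T):walk}. What each approach buys: yours is self-contained, needing neither the external congruence results from~\cite{jaJ2020a} nor any of the Coxeter-polynomial machinery, and it mirrors the bridge-removal technique the paper deploys later anyway; the paper's proof is shorter given the machinery already in place and showcases how the Coxeter polynomial constrains the cycle type, but at the price of importing nontrivial results about strong Gram congruence of trees.
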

\begin{proof}
Consider the linear quiver $L$ with $m$ vertices,
\[
L=\xymatrix@C=1.5pc{v_1 \ar[r]^-{1} & v_2 \ar[r]^-{2} & v_3 \ar[r]^-{3} & v_4  \cdots  v_{m-2} \ar[r]^-{m-2} & v_{m-1} \ar[r]^-{m-1} & v_m.}
\]
Then $\xi^-_L(v_t)=v_{t+1}$ if $t<m$, and $\xi^-_L(v_m)=v_1$, that is, $\xi^-_L$ is a cyclic permutation. By equation~(\ref{EQ:2}), $\cox_L(\va)=\nu_m(\va)$.

\medskip
Assume now that $Q$ is an arbitrary tree quiver. Using~\cite[Corollary~3.11 and Proposition~3.13]{jaJ2020a}, we have $q_L \approx q_Q$, and in particular $\cox_Q(\va)=\cox_L(\va)$. If $\xi^-_Q$ is not a cyclic permutation, then $\ell=\ell(\ct(Q))>1$, and again by equation~(\ref{EQ:2}), the polynomial $\cox_Q$ has $1$ as a root. This is impossible since $1$ is not a root of $\cox_L(\va)=\nu_m(\va)$.
\end{proof}

Denote by $\Quiv_m(n)$ the set of connected loop-less quivers having $m$ vertices and $n$ arrows. We will also use the notation $\Quiv^c_m(n)$ where $c=n-m+1$, or simply $\Quiv^c(n)$.

\begin{proposition} \label{P(T):walk}
For a quiver $Q$ in $\Quiv_m(n)$, the cycle type $\ct(Q)$ of $Q$ is a partition in $\Part^c_1(m)$, where $c=n-m+1$.
\end{proposition}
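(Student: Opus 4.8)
The plan is to argue by strong induction on the number of arrows $n$, repeatedly stripping off the maximal arrow by means of Lemma~\ref{L(T):res}. Writing $\pi = \ct(Q)$ and $\ell = \ell(\pi)$ for the number of orbits of $\xi^-_Q$, I must verify the two conditions defining membership in $\Part^c_1(m)$: the bound $\ell \le c+1$ and the parity $\ell - 1 \equiv c \pmod 2$ (the condition $\pi \vdash m$ holds automatically, since $\xi^-_Q$ permutes the $m$-element set $Q_0$). For the base case $n = m-1$ the connected quiver $Q$ is a tree, so Lemma~\ref{L:tree} gives that $\xi^-_Q$ is a single $m$-cycle; thus $\ell = 1$ and $c = 0$, and both conditions hold trivially.

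For the inductive step assume $c = n - m + 1 \ge 1$, so that $Q$ contains a cycle. Let $i$ be the maximal arrow and $Q' = Q^{(i)}$; by Lemma~\ref{L(T):res}, $\xi^-_Q = \xi^-_{Q'}\,[\sou(i),\tar(i)]$. The elementary fact I will exploit is that multiplying a permutation by a transposition changes its number of orbits by exactly $\pm 1$: the count increases by one when the two swapped points lie in the same orbit, and decreases by one when they lie in different orbits. If $Q'$ is still connected, it belongs to $\Quiv_m(n-1)$ with corank $c-1$, so the induction hypothesis gives $\ell(\ct(Q')) \le c$ and $\ell(\ct(Q')) - 1 \equiv c-1 \pmod 2$. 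Since $\ell = \ell(\ct(Q')) \pm 1$, the bound follows from $\ell \le \ell(\ct(Q')) + 1 \le c+1$, and the parity from $\ell - 1 \equiv (\ell(\ct(Q')) - 1) \pm 1 \equiv (c-1)+1 \equiv c \pmod 2$ (either sign flips the parity equally).

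The remaining, and main, difficulty is that the maximal arrow $i$ may be a bridge, in which case $Q'$ is disconnected. Here I will use that a structural walk never leaves a connected component, so $\xi^-_{Q'}$ preserves the components of $Q'$ and restricts on each to the permutation of that component; consequently $Q'$ splits into exactly two connected pieces $Q'_1 \in \Quiv_{m_1}(n_1)$ and $Q'_2 \in \Quiv_{m_2}(n_2)$ with $m_1 + m_2 = m$ and $n_1 + n_2 = n-1$, and $\ct(Q')$ is the partition whose parts are those of $\ct(Q'_1)$ together with those of $\ct(Q'_2)$. Setting $c_j = n_j - m_j + 1 \ge 0$, one checks $c_1 + c_2 = c$, and the induction hypothesis applies to each $Q'_j$ (both having fewer than $n$ arrows), giving $\ell_j := \ell(\ct(Q'_j)) \le c_j + 1$ and $\ell_j - 1 \equiv c_j \pmod 2$.

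Since $\sou(i)$ and $\tar(i)$ lie in different components of $Q'$, hence in different orbits of $\xi^-_{Q'}$, the transposition merges two orbits and therefore $\ell = \ell_1 + \ell_2 - 1$. The bound then follows from $\ell \le (c_1+1)+(c_2+1)-1 = c+1$, and the parity from $\ell - 1 = (\ell_1 - 1) + (\ell_2 - 1) \equiv c_1 + c_2 = c \pmod 2$, which completes the induction. The only genuinely delicate point is this bridge case: obtaining the sharp bound $\ell \le c+1$ (rather than a weaker $c+2$) relies precisely on the transposition \emph{decreasing} the orbit count, which is guaranteed exactly because its two endpoints sit in distinct connected components of $Q'$.
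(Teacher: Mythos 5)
Your proof is correct and follows essentially the same route as the paper: strip off the maximal arrow, apply Lemma~\ref{L(T):res} together with the fact that a transposition changes the orbit count by exactly $\pm 1$, and split into the cases where the remaining quiver is connected or breaks into two components whose coranks add up to $c$. The only difference is organizational — you run a strong induction on the number of arrows $n$, which treats both cases uniformly, whereas the paper inducts on the corank $c$ and must invoke an auxiliary induction on $n$ in the bridge case when one of the two components has corank zero.
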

\begin{proof}
That $\ct(Q) \vdash m$ is clear. We proceed by induction on $c \geq 0$. If $c=0$, then $Q$ is a tree, and by Lemma~\ref{L:tree}, $\xi^-_Q$ is a cyclic permutation. In particular $\ell(\ct(Q))=1$ and $\ct(Q)\in \Part^0_1(m)$.

\eject

Assume the claim holds for non-negative integers smaller than $c$. Fix $n>c$, take a quiver $Q \in \Quiv^c_m(n)$ and consider the quiver $Q^{(n)}$ obtained from $Q$ by removing the last arrow $n$.

\medskip
\noindent \textbf{Case 1.} Assume first that $Q^{(n)}$ is connected. Then $Q^{(n)} \in \Quiv^{c-1}_m(n-1)$, and by induction hypothesis we have $\xi^-_{Q^{(n)}} \in \Part_1^{(c-1)}(m)$, that is,
\[
0 \leq (c-1) - [\ell(\ct(Q^{(n)}))-1] \equiv 0 \mod 2.
\]
By Lemma~\ref{L(T):res} we have $\ell(\ct(Q))=\ell(\ct(Q^{(n)}))+\delta$, where $\delta=1$ if $\sou(n)$ and $\tar(n)$ belong to the same cycle of $\xi^-_Q$, and $\delta=-1$ otherwise. Hence
\begin{eqnarray}
0 &\leq & (c-1) - [\ell(\ct(Q^{(n)}))-1] \nonumber \\
&=&c-[\ell(\ct(Q^{(n)}))+\delta-1]+\delta-1 \nonumber \\
&=&c-[\ell(\ct(Q))-1]+\delta-1 \equiv 0 \mod 2. \nonumber
\end{eqnarray}
This shows that $\ct(Q) \in \Part^c_1(m)$, since $\delta-1 \leq 0$.

\medskip
\noindent \textbf{Case 2.} Assume now that $Q^{(n)}$ is not connected, that is,
$Q^{(n)}=Q^{\sou} \sqcup Q^{\tar}$ where $\sou(n) \in Q^{\sou}$ and $\tar(n) \in Q^{\tar}$. Note that $Q^{\sou} \in \Quiv^{c^{\sou}}_{m^{\sou}}(n^{\sou})$ and $Q^{\tar} \in \Quiv^{c^{\tar}}_{m^{\tar}}(n^{\tar})$ for non-negative integers $c^{\sou}$, $c^{\tar}$, $n^{\sou}$, $n^{\tar}$, $m^{\sou}$ and $m^{\tar}$ with $c^{\sou}+c^{\tar}=c$, $n^{\sou}+n^{\tar}=n-1$ and $m^{\sou}+m^{\tar}=m$. Thus, by induction on $c$, we may assume that
\[
\ct(\xi^-_{Q^{\sou}}) \in \Part^{c^{\sou}}_1(m^{\sou}), \quad \text{and} \quad \ct(\xi^-_{Q^{\tar}}) \in \Part^{c^{\tar}}_1(m^{\tar}),
\]
in case $c^{\sou},c^{\tar}>0$. If $c^{\sou}=0$ or $c^{\tar}=0$, we may use induction on $n$ to get the same conclusion, that is,
\[
0\leq c^{\sou}-[\ell(\ct(Q^{\sou}))-1] \equiv 0 \mod 2, \quad \text{and} \quad 0 \leq c^{\tar}-[\ell(\ct(Q^{\tar}))-1] \equiv 0 \mod 2.
\]
Note that, by Lemma~\ref{L(T):res}, we have $\ell(Q)=\ell(Q^{\sou})+\ell(Q^{\tar})-1$.  Therefore
\[
0 \leq (c^{\sou}-[\ell(\ct(Q^{\sou}))-1])+(c^{\tar}-[\ell(\ct(Q^{\tar}))-1])=c-[\ell(\ct(Q))-1] \equiv 0 \mod 2.
\]
We conclude that $\ct(Q) \in \Part^c_1(n)$.
\end{proof}

\section{Representative families of quivers} \label{(S):tHree}

In this section we fix connected non-negative unit forms of Dynkin type $\A_{r}$ having as Coxeter polynomial those permitted by Proposition~\ref{P(T):walk}. We need the following preliminary observation.

\begin{remark} \label{R(H):pair}
Let $Q$ be a loop-less quiver (not necessarily connected) with $n$ arrows. For any distinct vertices $v$ and $w$ in $Q$, let $Q'$ be the quiver obtained from $Q$ by adding a pair of parallel arrows from $v$ to $w$, labeled as
\[
\xymatrix@C=2.5pc{v \ar@<.5ex>[r]^-{n+1} \ar@<-.5ex>[r]_-{n+2} & w.}
\]
Then $\xi^-_{Q'}=\xi^-_Q$.
\end{remark}
\begin{proof}
Follows from Lemma~\ref{L(T):res}.
\end{proof}

Let $\pi=(\pi_1,\ldots,\pi_{\ell})$ be a partition of the integer $m \geq 2$, consisting of $\ell=\ell(\pi)$ parts. Observe that $\pi \in \Part_1^c(m)$ if and only if $c=\ell-1+2\dd$ for some integer $\dd \geq 0$. Below we determine a connected quiver $\overrightarrow{\A}^{\dd}[\pi]$ and its inverse $\overrightarrow{\Star}^{\dd}[\pi]$, with cycle type $\pi$ and corank $c$ for such $\pi$ and $\dd \geq 0$, (that is, connected quivers with $m$ vertices, $n=m+\ell+2(\dd-1)$ arrows and cycle type $\pi$). Roughly speaking, we start with a tree on $m$ vertices (which has cycle type~$(m)$ by Lemma~\ref{L:tree}), use Lemma~\ref{L(T):res} to break its cyclic components, and then apply Remark~\ref{R(H):pair} to obtain the correct corank, without modifying the associated cycle type.

\medskip
For a quiver $Q$ with vertices $v,w \in Q_0$, denote by $Q[v,w]$ the quiver obtained from $Q$ by adding an arrow from $v$ to $w$, placed last in the ordering of $Q_1$. Denote by $E_m=(\{v_1,\ldots,v_m\},\emptyset)$ the quiver with $m$ vertices $v_1,\ldots,v_m$ and no arrows, and consider the \textbf{linear quiver} $\overrightarrow{\A}_m$ and the \textbf{maximal star quiver} $\overrightarrow{\Star}_m$ each with $m-1$ arrows, given by
\[
\overrightarrow{\A}_m=E_m[v_1,v_2][v_2,v_3]\cdots [v_{m-2},v_{m-1}] [v_{m-1},v_m],
\]
and
\[
\overrightarrow{\Star}_m=E_m[v_1,v_2][v_1,v_3]\cdots [v_1,v_{m-1}] [v_1,v_m].
\]

\begin{definition}\label{Ex(H)}
For any partition $\pi=(\pi_1,\ldots,\pi_{\ell})$ of an integer $m \geq 2$, and any $\dd \geq 0$, consider the connected quivers $\overrightarrow{\A}^{\dd}[\pi]$ and $\overrightarrow{\Star}^{\dd}[\pi]$, with $m$ vertices and $n=m+\ell+2(\dd-1)$ arrows, defined as follows. If $\ell>1$, take the indices $i_1=m-\pi_1,i_2=m-(\pi_1+\pi_2),\ldots,i_{\ell-2}=m-(\pi_1+\ldots+\pi_{\ell-2})$, and $i_{\ell-1}=m-(\pi_1+\ldots+\pi_{\ell-1})=\pi_{\ell}$, (all of which belong to the set $\{1,\ldots,m-1\}$).
\begin{itemize}
 \item[i)] Take $\overrightarrow{\A}^{0}[\pi]=\overrightarrow{\A}_m[\tar(m-1),\sou(i_1)][\sou(i_1),\sou(i_2)]\cdots [\sou(i_{\ell-2}),\sou(i_{\ell-1})]$ if $\ell>1$, and $\overrightarrow{\A}^{0}[(m)]=\overrightarrow{\A}_m$ if $\ell=1$. Define recursively for $\dd>0$,
 \begin{equation*}
\overrightarrow{\A}^{\dd}[\pi]= \left\{ \begin{array}{l l}
\left(\overrightarrow{\A}^{\dd-1}[\pi]\right)[\sou(i_{\ell-1}),\sou(i_{\ell-2})][\sou(i_{\ell-2}),\sou(i_{\ell-1})], & \text{if $\ell>2$},\\
\left(\overrightarrow{\A}^{\dd-1}[\pi]\right)[\sou(i_1),\tar(m-1)][\tar(m-1),\sou(i_1)], & \text{if $\ell=2$},\\
\left(\overrightarrow{\A}^{\dd-1}[\pi]\right)[\tar(m-1),\sou(m-1)][\sou(m-1),\tar(m-1)], & \text{if $\ell=1$}.
\end{array} \right.
\end{equation*}

 \item[ii)] Take $\overrightarrow{\Star}^{0}[\pi]=\overrightarrow{\Star}_m[\sou(1),\tar(i_1)][\sou(1),\tar(i_2)]\cdots [\sou(1),\tar(i_{\ell-1})]$ if $\ell>1$, and $\overrightarrow{\Star}^{0}[(m)]=\overrightarrow{\Star}_m$ if $\ell=1$. Define recursively for $\dd>0$,
 \begin{equation*}
\overrightarrow{\Star}^{\dd}[\pi]= \left\{ \begin{array}{l l}
\left(\overrightarrow{\Star}^{\dd-1}[\pi]\right)[\sou(1),\tar(i_{\ell-1})][\sou(1),\tar(i_{\ell-1})], & \text{if $\ell>1$},\\
\left(\overrightarrow{\Star}^{\dd-1}[\pi]\right)[\sou(1),\tar(m-1)][\sou(1),\tar(m-1)], & \text{if $\ell=1$}.
\end{array} \right.
\end{equation*}
\end{itemize}
\end{definition}

For example, if $m=2$, $\pi=(1,1)$ and $\dd=1$, then $n=4$, $i_1=1$, and
\[
\overrightarrow{\A}^{1}[(1,1)]=\xymatrix@C=1.5pc{\bulito \ar@<3.3ex>[r]^-{1} \ar@<1.1ex>@{<-}[r]^-{2} \ar@<-1.1ex>[r]^-{3} \ar@<-3.3ex>@{<-}[r]^-{4} & \bulito} \qquad \qquad \overrightarrow{\Star}^{1}[(1,1)]=\xymatrix@C=1.5pc{\bulito \ar@<3.3ex>[r]^-{1} \ar@<1.1ex>[r]^-{2} \ar@<-1.1ex>[r]^-{3} \ar@<-3.3ex>[r]^-{4} & \bulito}
\]
If $m=7$, $\pi=(3,2,2)$ and $\dd=1$, then $n=10$, $i_1=4$, $i_2=2$, and we have
\[
\xy 0;/r.20pc/:
(-68, 5)="Fr1" *{};
( 68, 5)="Fr2" *{};
( 68,-40)="Fr3" *{};
(-68,-40)="Fr4" *{};
( -32, -5)="T1" *{\xymatrix@C=1.5pc{\bulito \ar[r]^-{1} & \bulito \ar[r]^-{2} & \bulito \ar[r]^-{3} & \bulito \ar[r]^-{4} \ar@/^10pt/[ll]^-{8} \ar@{<-}@<2.5ex>@/^10pt/[ll]^-{9} \ar@<5ex>@/^10pt/[ll]^-{10} & \bulito \ar[r]^-{5} & \bulito \ar[r]^-{6} & \bulito \ar@/^20pt/[lll]^-{7}}};
( 12, -5)="T2" *{\xymatrix{\bulito & \bulito & \bulito \\ & \bulito \ar[u]^-{1} \ar@<-.5ex>[ur]_-{8} \ar@/_10pt/@<-1ex>[ur]_-{9} \ar@/_10pt/@<-3ex>[ur]_-{10} \ar@<.5ex>[ur]^-{2} \ar[rd]^(.6){3} \ar@<-.5ex>[d]_-{4} \ar@<.5ex>[d]^-{7} \ar[ld]_-{5} \ar[lu]^-{6} & \\ \bulito & \bulito & \bulito}};
(-25,-2)="Fr3" *{\overrightarrow{\A}^{1}[(3,2,2)]};
( 37,-2)="Fr4" *{\overrightarrow{\Star}^{1}[(3,2,2)]};
\endxy
\]

\begin{remark}\label{R(H)minus}
For any partition $\pi$ of an integer $m \geq 2$, and any $\dd \geq 0$, the connected quivers  $\overrightarrow{\A}^{\dd}[\pi]$ and $\overrightarrow{\Star}^{\dd}[\pi]$ are loop-less and inverse of each other.
\end{remark}
\begin{proof}
Take $Q=\overrightarrow{\A}^{\dd}[\pi]=(Q_0,Q_1,\sou,\tar)$, and keep the notation of Definition~\ref{Ex(H)}. Observe first that if $\ell>1$, then
\[
\tar(m-1)=m>i_1>i_2>\ldots >i_{\ell-1}>0.
\]
Since the first $m-1$ arrows of $Q$ constitute the linear quiver $\overrightarrow{\A}_m$, then $Q$ is a connected loop-less quiver. Clearly, the same holds if $\ell=1$. Moreover, in any case we have $\tar(i)=\sou(i+1)$ for any $i=1,\ldots,n-1$. This shows that for any $i \in Q_1$,
\[
\alpha_Q^-(i^{-1})=i^{-1}(i-1)^{-1}\cdots 2^{-1}1^{-1},
\]
and therefore, $\sou^*(i^*)=\tar(1^{-1})=\sou(1)$ (see definition right before Proposition~\ref{INVERSE}). On the other hand,
 \begin{equation*}
\alpha_Q^-(i^{+1})= \left\{ \begin{array}{l l}
i, & \text{if $i=1,\ldots,m-1$},\\
i i_t, & \text{if $i=m,\ldots,m+\ell-2$}, \\
i \alpha_Q^-((i-1)^{+1}), & \text{if $i=m+\ell-1,\ldots,n$,}
\end{array} \right.
\end{equation*}
where the list $m,\ldots,m+\ell-2$ is empty if $\ell=1$. Then
\begin{equation*}
\tar^*(i^*)= \left\{ \begin{array}{l l}
\tar(i), & \text{if $i=1,\ldots,m-1$},\\
\tar(i_t), & \text{if $i=m,\ldots,m+\ell-2$}, \\
\tar(j), & \text{if $i=m+\ell-1,\ldots,n$,}
\end{array} \right.
\end{equation*}
where $j=i_{\ell-1}$ if $\ell>1$ and $j=m-1$ if $\ell=1$. Taking $Q'=\overrightarrow{\Star}^{\dd}[\pi]=(Q_0',Q_1',\sou',\tar')$, we observe directly form Definition~\ref{Ex(H)}$(ii)$ that $\sou^*=\sou'$ and $\tar^*=\tar'$, that is,
\[
\left( \overrightarrow{\A}^{\dd}[\pi] \right)^{-1}=\overrightarrow{\Star}^{\dd}[\pi].
\]
By Proposition~\ref{INVERSE}, the quiver $Q'$ is also connected and loop-less.
\end{proof}

Note that the column vector $\mathbbm{1}$ having all entries equal to $1$ is always a root of $q_{Q}$ for $Q=\overrightarrow{\A}^{\dd}[\pi]$, and that $q_{Q^{-1}}$ is always a weakly positive unit form (for $Q^{-1}=\overrightarrow{\Star}^{\dd}[\pi]$).

\section{Proof of main results} \label{(S):Four}

For a permutation $\rho$ of the vertices $Q_0$ of a quiver $Q=(Q_0,Q_1,\sou,\tar)$, denote by $\rho \cdot Q=(Q_0,Q_1',\sou',\tar')$ the quiver obtained by determining
\[
\sou'(i')=\rho(\sou(i)), \quad \text{and} \quad \tar(i')=\rho(\tar(i)),
\]
for an arrow $i'$ in $\rho \cdot Q$ corresponding to the arrow $i$ in $Q$. In other words, $\rho \cdot Q$ is the unique quiver satisfying $I(\rho \cdot Q)=P(\rho)I(Q)$. Observe that $\widecheck{G}_{\rho \cdot Q}=\widecheck{G}_{Q}$, and that $(\rho \cdot Q)^{-1}=\rho \cdot Q^{-1}$. Indeed,
\[
I((\rho \cdot Q)^{-1})=I(\rho \cdot Q)\widecheck{G}_{\rho \cdot Q}^{-1}=P(\rho)I(Q)\widecheck{G}_{Q}^{-1}=P(\rho)I(Q^{-1})=I(\rho \cdot Q^{-1}).
\]
The \textbf{quadratic form $q_Q$ associated to a quiver} $Q$ is given by $q_Q(x)=\frac{1}{2}||I(Q)x||^2$ for $x \in \Z^n$ (cf.~\cite[Definition~3.1]{jaJ2020a}).

\begin{lemma} \label{L(O):well}
Let $Q$ and $Q'$ be connected loop-less quivers with $n$ arrows and $m$ vertices.
\begin{itemize}
 \item[i)] If $q_{Q'}=q_{Q}$, then there is a permutation of vertices $\rho$ such that
 \[
\rho \cdot Q'=  Q, \quad \text{or} \quad \rho \cdot Q'=Q^{op},
 \]
 where $Q^{op}$ denotes the quiver obtained from $Q$ by changing the orientation of all arrows (the \textbf{opposite quiver} of $Q$).
 \item[ii)] If $q_{Q'} \sim q_Q$, then there is a permutation of vertices $\rho$ such that
 \[
 I(\rho \cdot Q')=I(Q)B,
 \]
 for some $\Z$-invertible matrix $B$.
 \item[iii)] If $q_{Q'} \approx q_{Q}$, then there is a permutation of vertices $\rho$ such that
\[
 I(\rho \cdot Q')=I(Q)B,  \quad \text{and} \quad \widecheck{G}_{Q'}=\widecheck{G}_{\rho \cdot Q'}=B^{\tr}\widecheck{G}_QB,
\]
for some $\Z$-invertible matrix $B$. In particular, $I(\rho \cdot (Q')^{-1})=I(Q^{-1})B^{-\tr}$.
\end{itemize}
\end{lemma}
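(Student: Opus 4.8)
The three parts are of increasing strength, and each builds on the previous one, so I would prove them in order. The governing principle throughout is the relation $I(\rho \cdot Q') = P(\rho)I(Q')$ from the definition of $\rho \cdot Q'$, together with the fact that $q_Q$ depends only on the product $I(Q)^{\tr}I(Q)$, since $q_Q(x) = \tfrac{1}{2}\|I(Q)x\|^2 = \tfrac{1}{2}x^{\tr}I(Q)^{\tr}I(Q)x$. Recall also that $\widecheck{G}_Q$ is determined by $I(Q)^{\tr}I(Q) = \widecheck{G}_Q + \widecheck{G}_Q^{\tr}$, so $q_{Q'} = q_Q$ is equivalent to $\widecheck{G}_{Q'} = \widecheck{G}_Q$, i.e.\ to $I(Q')^{\tr}I(Q') = I(Q)^{\tr}I(Q)$.

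\emph{Part (i).} The plan is to read off the combinatorial data. The hypothesis $q_{Q'}=q_Q$ gives $I(Q')^{\tr}I(Q') = I(Q)^{\tr}I(Q)$. The diagonal entry $(i,i)$ equals $\|I_i\|^2 = 2$ for every non-loop arrow, and the off-diagonal entry $(i,j)$ equals $I_i^{\tr}I_j \in \{-1,0,+1\}$, which records whether arrows $i$ and $j$ share a vertex and, via the sign, their relative orientation at that shared vertex. Thus knowing $I(Q')^{\tr}I(Q')$ recovers, for each pair of arrows, whether they meet and with which relative orientation; this is exactly the data of the incidence bigraph $\Inc(Q)$ from Theorem~\ref{T(O):main}. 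Since both quivers are connected, this local orientation data determines the quiver up to a global choice of vertex labelling \emph{and} a possible global orientation reversal (the two orientations give the same unsigned pairwise-incidence pattern). Concretely, I would fix an arrow and a direction, then propagate the forced identifications of vertices of $Q'$ with vertices of $Q$ along the connected structure; connectedness guarantees this reaches every vertex, and the only ambiguity is the single binary choice between $Q$ and $Q^{op}$. This yields a bijection $\rho$ of vertices with $\rho \cdot Q' = Q$ or $\rho \cdot Q' = Q^{op}$.

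\emph{Parts (ii) and (iii).} For (ii), the hypothesis $q_{Q'}\sim q_Q$ gives a $\Z$-invertible $B$ with $G_{Q'} = B^{\tr}G_Q B$, i.e.\ $I(Q')^{\tr}I(Q') = B^{\tr}I(Q)^{\tr}I(Q)B = (I(Q)B)^{\tr}(I(Q)B)$. So $I(Q')$ and $I(Q)B$ have the same Gram matrix; both are $m \times n$ incidence-type matrices whose columns generate the same quadratic structure. The task is to produce a permutation $\rho$ with $I(\rho \cdot Q') = P(\rho)I(Q') = I(Q)B$. I expect to invoke a rigidity statement: two full-rank integer matrices with equal Gram matrix $I(Q')^{\tr}I(Q') = (I(Q)B)^{\tr}(I(Q)B)$ differ by a left factor that is an orthogonal integer matrix, and for matrices of this incidence type the only such integer isometries are signed permutation matrices; connectedness of $Q$ and the fixed normalization of incidence columns ($I_i = \bas_{\sou(i)} - \bas_{\tar(i)}$) force the signs to be consistent, leaving an honest permutation matrix $P(\rho)$. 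This gives $P(\rho)I(Q') = I(Q)B$, which is (ii). For (iii), the strong hypothesis $q_{Q'}\approx q_Q$ gives the same $B$ but now satisfying $\widecheck{G}_{Q'} = B^{\tr}\widecheck{G}_Q B$ for the \emph{upper triangular} Gram matrices. Since $\widecheck{G}_{\rho \cdot Q'} = \widecheck{G}_{Q'}$ (as noted in the text, the vertex-permutation action does not change the triangular Gram matrix), the first two displayed identities follow by combining (ii) with this congruence. Finally, for the last assertion, I would apply Proposition~\ref{INVERSE}: from $I(\rho \cdot Q') = I(Q)B$ and $\widecheck{G}_{\rho\cdot Q'} = B^{\tr}\widecheck{G}_Q B$, compute
\[
I(\rho\cdot(Q')^{-1}) = I(\rho\cdot Q')\widecheck{G}_{\rho\cdot Q'}^{-1} = I(Q)B\,(B^{\tr}\widecheck{G}_Q B)^{-1} = I(Q)\widecheck{G}_Q^{-1}B^{-\tr} = I(Q^{-1})B^{-\tr},
\]
using $(\rho\cdot Q')^{-1} = \rho \cdot (Q')^{-1}$ from the text.

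\emph{Main obstacle.} The crux is the rigidity step in part (ii): upgrading the equality of Gram matrices $(I(Q)B)^{\tr}(I(Q)B) = I(Q')^{\tr}I(Q')$ to an actual \emph{permutation} $P(\rho)$ relating $I(Q')$ and $I(Q)B$, rather than merely a real orthogonal transformation. One must exploit that the columns are differences of distinct canonical basis vectors and that the quiver is connected, so that the lattice isometry fixing the underlying vertex lattice (up to sign) must permute the canonical basis; the potential sign ambiguities at each vertex must be shown to cancel globally. I would either cite the corresponding normal-form result from~\cite{jaJ2020a} if available or argue the sign-consistency directly via a spanning connected subgraph. The remaining computations in (iii) are then routine algebra.
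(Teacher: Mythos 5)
Your parts (i) and (iii) are essentially the paper's: part (i) is exactly the reconstruction statement that the paper simply quotes from~\cite[Corollary~7.3]{jaJ2018} (your propagation sketch is the idea behind that result; note only that the off-diagonal entries of $I(Q)^{\tr}I(Q)$ can be $\pm 2$, not just $0,\pm 1$, since parallel and antiparallel arrows are allowed --- the paper's representative quivers contain parallel pairs), and your computation in (iii) coincides with the paper's, granted the observation that the matrix $B$ produced in (ii) is $\pm C$ for the strong congruence matrix $C$, so that $\widecheck{G}_{Q'}=B^{\tr}\widecheck{G}_QB$ survives the possible sign change.

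The genuine gap is the rigidity step in (ii), precisely where you placed the main obstacle, and your proposed repair does not close it. From $I(Q')^{\tr}I(Q')=(I(Q)B)^{\tr}(I(Q)B)$ you only get a \emph{real} orthogonal $O$ with $I(Q)B=O\,I(Q')$; the claim that two integer matrices with equal Gram matrices differ by an \emph{integer} orthogonal left factor is false in general: take $X=\left(\begin{smallmatrix}3&-4\\4&\phantom{-}3\end{smallmatrix}\right)$ and $Y=5\,\Id_2$, so $X^{\tr}X=Y^{\tr}Y=25\,\Id_2$ but $YX^{-1}=\tfrac{1}{5}\left(\begin{smallmatrix}3&4\\-4&3\end{smallmatrix}\right)$ is not integral. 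Hence the classification of integer isometries as signed permutations cannot be invoked --- integrality is the very thing to be proven, and the sign-consistency-by-connectedness argument you outline (which is correct) only enters afterwards. The step can be repaired, but by an argument you do not give: by connectedness the column lattice of $I(Q)$, hence of $I(Q)B$ since $B$ is $\Z$-invertible, is the root lattice $\{x\in\Z^m \mid \mathbbm{1}^{\tr}x=0\}$, and each column of $I(Q)B$ has squared length $2$ (the corresponding diagonal entry of $I(Q')^{\tr}I(Q')$), hence equals $\bas_a-\bas_b$ for some $a\neq b$; therefore $I(Q)B=I(Q'')$ for a loop-less quiver $Q''$ on $Q_0$ with $q_{Q''}=q_{Q'}$, and part (i) applied to the pair $(Q',Q'')$ yields $\rho$ with $I(\rho\cdot Q')=\pm I(Q'')=I(Q)(\pm B)$. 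This reduction of (ii) to (i) is exactly how the paper argues; it obtains $I(Q)C=I(Q'')$ by a different device --- the columns of $C$ are roots of $q_Q$ (because $q_{Q'}$ is unitary), every root of $q_Q$ is the incidence vector $\vdim(\gamma)$ of a walk $\gamma$ in $Q$ by \cite[Lemma~6.1]{jaJ2018}, and Remark~\ref{R(M):inc} then identifies $I(Q)C$ column-by-column as an incidence matrix --- but the effect is the same. In short, your plan for (ii) as stated fails at the integrality claim, and any repair either routes (ii) through (i), as the paper does, or requires a genuine root-lattice automorphism theorem that you would have to prove or cite.
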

\begin{proof}
For $(i)$, if $q_{Q'}=q_{Q}$, by~\cite[Corollary~7.3]{jaJ2018} there is either an isomorphism of quivers $(f_0,f_1):Q' \to Q$, or an isomorphism $(f_0,f_1):Q' \to Q^{op}$. This means that, taking $\rho=f_0$, we have $\rho \cdot Q'=  Q$ or $\rho \cdot Q'=Q^{op}$.

\medskip
To show $(ii)$, assume that there is a $\Z$-invertible matrix $C$ such that $q_{Q'}=q_QC$. Then the columns $c_1,\ldots,c_n$ of $C$ are roots of the unit form $q_Q$ (since $q_{Q'}$ is unitary), and by~\cite[Lemma~6.1]{jaJ2018} there are walks $\gamma_1,\ldots,\gamma_n$ in $Q$ such that $c_i=\vdim(\gamma_i)$ for $i=1,\ldots,n$. Denote by $Q''$ the quiver with $Q''_0=Q_0$ having an arrow $i \in Q''_1$ from $\sou(\gamma_i)$ to $\tar(\gamma_i)$ for each $i=1,\ldots,n$. Then, by Remark~\ref{R(M):inc}, we have $I(Q'')=I(Q)C$, and therefore
\[
q_{Q''}(x)=\frac{1}{2}x^{\tr}I(Q'')^{\tr}I(Q'')x=\frac{1}{2}x^{\tr}C^{\tr}I(Q)^{\tr}I(Q)Cx=q_Q(Cx)=q_{Q'}(x),
\]
for any $x \in \Z^n$. By $(i)$, there is a permutation $\rho$ of $Q_0$ with $\rho \cdot Q'=Q''$ or $\rho \cdot Q'=(Q'')^{op}$. Taking $B=C$ in the first case, and $B=-C$ in the second case, we get
\[
I(\rho \cdot Q')=I(Q'')=I(Q)C=I(Q)B, \quad \text{or} \quad I(\rho \cdot Q')=I((Q'')^{op})=-I(Q)C=I(Q)B,
\]
since $I(Q^{op})=-I(Q)$.

\medskip
To show $(iii)$, take $C$ such that $\widecheck{G}_{Q'}=C^{\tr}\widecheck{G}_{Q}C$.
By $(ii)$ we may assume that there is a permutation $\rho$ of $Q_0$ and a matrix $B$ such that $I(\rho \cdot Q')=I(Q)B$ and $\widecheck{G}_{Q'}=B^{\tr}\widecheck{G}_{Q}B$ (for $B=\pm C$). To show the last claim note that, using Proposition~\ref{INVERSE},
\[
I(\rho \cdot (Q')^{-1})=I(\rho \cdot Q')\widecheck{G}_{Q'}^{-1}=[I(Q)B][B^{\tr}\widecheck{G}_{Q}B]^{-1}=I(Q)\widecheck{G}_{Q}^{-1}B^{-\tr}=I(Q^{-1})B^{-\tr},
\]
which completes the proof.
\end{proof}

The following is our main definition. Denote by $\Part(m)$ the set of partitions of the integer $m \geq 2$.

\begin{definition}\label{DEF}
Take $0 \leq c<n$ and $m=n-c+1$. Assume that $q \in \Quad_{\A}^c(n)$, and that $Q \in \Quiv_m(n)$ is a loop-less quiver such that $q=q_Q$. We define a function $\ct:\Quad_{\A}^c(n) \to \Part(m)$ as the cycle type $\ct(q):=\ct(Q)$ of $Q$.
\end{definition}

By Lemma~\ref{L(O):well}$(i)$, the assignment $q \mapsto \ct(q)$ is well defined. Indeed, if $q=q_{Q'}$ for some other quiver $Q'$, then $\ct(Q')=\ct(\rho \cdot Q')=\ct((\rho \cdot Q')^{op})=\ct(Q)$.

\begin{theorem} \label{MAIN}
For any integers $0 \leq c < n$, the function $\ct$ given in Definition~\ref{DEF} is invariant under strong Gram congruence. Moreover, the image $\ct[\Quad_{\A}^c(n)]$ of $\ct$ is exactly $\Part^c_1(n-c+1)$, and for any $q$ in $\Quad_{\A}^c(n)$, the Coxeter polynomial of $q$ is given by
\[
\cox_q(\va)=(\va-1)^{c-1}\chr_{\ct(q)}(\va).
\]
\end{theorem}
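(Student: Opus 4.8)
The plan is to prove the three assertions of Theorem~\ref{MAIN} by reducing everything to the quiver-theoretic results already established, via the realization $\Quad_{\A}^c(n) = \{q_Q\}$ over connected loop-less quivers $Q \in \Quiv_m(n)$ with $m=n-c+1$. Since Definition~\ref{DEF} sets $\ct(q) := \ct(Q)$ for any such $Q$, and this is well defined by Lemma~\ref{L(O):well}$(i)$, the Coxeter polynomial formula is immediate: given $q = q_Q$, Corollary~\ref{C:pol} gives directly $\cox_q(\va) = \cox_Q(\va) = (\va-1)^{c-1}\chr_{\ct(Q)}(\va) = (\va-1)^{c-1}\chr_{\ct(q)}(\va)$. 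So the real content lies in the strong Gram invariance and the computation of the image.

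For the strong Gram invariance, I would take $q, q' \in \Quad_{\A}^c(n)$ with $q' \approx q$, realize them as $q = q_Q$ and $q' = q_{Q'}$, and apply Lemma~\ref{L(O):well}$(iii)$. This yields a permutation $\rho$ of vertices and a $\Z$-invertible matrix $B$ with $I(\rho \cdot Q') = I(Q)B$ and, crucially, $I(\rho \cdot (Q')^{-1}) = I(Q^{-1})B^{-\tr}$. The goal is to show $\ct(Q') = \ct(Q)$, i.e.\ that the Coxeter-Laplace permutations $\xi^-_{Q'}$ and $\xi^-_{Q}$ are conjugate. Using Theorem~\ref{T(O):main}, $\LCox_{Q'} = \Id_m - I((Q')^{-1})I(Q')^{\tr}$; since $\ct(Q') = \ct(\rho\cdot Q')$ (permuting vertices conjugates $\xi^-$), I may replace $Q'$ by $\rho \cdot Q'$ and compute
\[
\LCox_{\rho\cdot Q'} = \Id_m - I(Q^{-1})B^{-\tr}B^{\tr}I(Q)^{\tr} = \Id_m - I(Q^{-1})I(Q)^{\tr} = \LCox_Q.
\]
The matrices $B^{-\tr}$ and $B^{\tr}$ cancel exactly, so in fact $\LCox_{\rho\cdot Q'} = \LCox_Q$ as permutation matrices, giving $\xi^-_{\rho\cdot Q'} = \xi^-_Q$ and hence equal cycle types. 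This is cleaner than mere conjugacy and is the heart of the argument.

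For the image, the inclusion $\ct[\Quad_{\A}^c(n)] \subseteq \Part^c_1(n-c+1)$ is exactly Proposition~\ref{P(T):walk} applied through the identification $\ct(q) = \ct(Q)$. For surjectivity I would use the explicit representatives of Section~\ref{(S):tHree}: given $\pi \in \Part^c_1(m)$ with $m = n-c+1$, write $c = \ell - 1 + 2\dd$ with $\ell = \ell(\pi)$ and $\dd \geq 0$ (possible precisely because $\pi \in \Part^c_1(m)$), and take $Q = \overrightarrow{\A}^{\dd}[\pi] \in \Quiv_m(n)$. I must check that $\ct(Q) = \pi$; this should follow from the construction in Definition~\ref{Ex(H)}, tracing how the tree quiver $\overrightarrow{\A}_m$ (with cycle type $(m)$ by Lemma~\ref{L:tree}) has its single cycle split into the prescribed parts via the added arrows and Lemma~\ref{L(T):res}, while the parallel-arrow additions (Remark~\ref{R(H):pair}) adjust the corank to $c$ without altering $\xi^-$. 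Then $q_Q \in \Quad_{\A}^c(n)$ with $\ct(q_Q) = \pi$.

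The main obstacle is the surjectivity step, specifically verifying rigorously that $\ct(\overrightarrow{\A}^{\dd}[\pi]) = \pi$. The cancellation computation for invariance is essentially automatic once Lemma~\ref{L(O):well}$(iii)$ is in hand, but confirming that the recursive construction of $\overrightarrow{\A}^{\dd}[\pi]$ produces exactly the partition $\pi$ requires carefully analyzing the structural decreasing walks $\alpha^-_Q(v)$ through the added arrows indexed by $i_1 > i_2 > \cdots > i_{\ell-1}$, and confirming each added "breaking" arrow decreases the number of cycles appropriately via Lemma~\ref{L(T):res} while each parallel pair leaves $\xi^-$ untouched via Remark~\ref{R(H):pair}. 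I expect this bookkeeping to be the most delicate part, and I would likely isolate it as a separate lemma establishing $\ct(\overrightarrow{\A}^{\dd}[\pi]) = \pi$ before invoking it here.
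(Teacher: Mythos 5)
Your proposal is correct and follows essentially the same route as the paper's own proof: strong Gram invariance via Lemma~\ref{L(O):well}$(iii)$ and the exact cancellation $\LCox_{\rho\cdot Q'}=\Id-I(Q^{-1})B^{-\tr}B^{\tr}I(Q)^{\tr}=\LCox_Q$ combined with Theorem~\ref{T(O):main}, the inclusion of the image via Proposition~\ref{P(T):walk}, surjectivity via the representatives $\overrightarrow{\A}^{\dd}[\pi]$ of Definition~\ref{Ex(H)} with $c=\ell-1+2\dd$, and the polynomial formula via Corollary~\ref{C:pol}. The bookkeeping you flag as delicate, namely that $\ct(\overrightarrow{\A}^{\dd}[\pi])=\pi$, is treated by the paper in exactly the way you anticipate: it is asserted from the construction (tree of cycle type $(m)$ by Lemma~\ref{L:tree}, breaking arrows via Lemma~\ref{L(T):res}, corank-adjusting parallel pairs via Remark~\ref{R(H):pair}) rather than proved in a separate detailed lemma.
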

\begin{proof}
Assume that $q' \approx q$, and choose quivers $Q$ and $Q'$ such that $q=q_Q$ and $q'=q_{Q'}$. By Lemma~\ref{L(O):well}$(iii)$, we may assume that there is a $\Z$-invertible matrix $B$ such that $I(Q')=I(Q)B$ and $I((Q')^{-1})=I(Q')B^{-\tr}$ (by replacing $\rho \cdot Q'$ by $Q'$ if necessary). Then
\[
\LCox_{Q'}=\Id-I((Q')^{-1})I(Q')^{\tr}=\Id-I(Q^{-1})B^{-\tr}B^{\tr}I(Q)^{\tr}=\Id-I(Q^{-1})I(Q)^{\tr}=\LCox_Q,
\]
and by Theorem~\ref{T(O):main}, we have $\xi^-_{Q'}=\xi^-_Q$. In particular,
\[
\ct(q')=\ct(Q')=\ct(\xi^-_{Q'})=\ct(\xi^-_Q)=\ct(Q)=\ct(q).
\]

Now, by definition and Proposition~\ref{P(T):walk}, the partition $\ct(q)$ belongs to the set $\Part^c_1(n-c+1)=\Part^c_1(m)$, for any quadratic form $q$ in $\Quad_{\A}^c(n)$. That any partition in $\Part^c_1(m)$ is the cycle type $\ct(q)$ of a quadratic form $q$ in $\Quad_{\A}^c(n)$ follows from Definition~\ref{Ex(H)}. Indeed, take $\pi \in \Part^c_1(m)$ with $\ell=\ell(\pi)$, and consider the quiver $Q=\overrightarrow{\A}^{\dd}[\pi]$ where $c=\ell-1+2\dd$. Then $q_Q \in \Quad^c_{\A}(n)$ and $\ct(q_Q)=\pi$. The description of Coxeter polynomials was shown in Corollary~\ref{C:pol}.
\end{proof}

Recall that the Coxeter matrix $\Cox_q$ of a non-negative unit form $q$ is a weakly periodic matrix, that is, there is a minimal $k \geq 1$ such that $\Id-\Cox_q^k$ is a nilpotent matrix (cf.~\cite{mS05} or~\cite{BJP19}). Such minimal power, denoted by $\RcoxN(q)=k$, is called \textbf{reduced Coxeter number} of $q$. In case $\Cox_q^k=\Id$ for some minimal $k \geq 1$, then $\coxN(q)=k$ is called \textbf{Coxeter number} of $q$, otherwise we set $\coxN(q)=\infty$.

\begin{corollary}\label{C(F):num}
Let $q$ be a unit form in $\Quad_{\A}^c(n)$, and consider its cycle type  $\ct(q)=(\pi_1,\ldots,\pi_{\ell})$. Then
\begin{itemize}
 \item[i)] The Coxeter number $\coxN(q)$ of $q$ is finite if and only if $\ell=1$, in which case $\coxN(q)=\pi_1$.
 \item[ii)] The reduced Coxeter number $\RcoxN(q)$ of $q$ is given by
\[
\RcoxN(q)=\lcm (\pi_1,\ldots,\pi_{\ell}),
\]
where $\lcm$ denotes \emph{least common multiple}.
\end{itemize}
\end{corollary}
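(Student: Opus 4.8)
The plan is to exploit the fact, established in Theorem~\ref{T(O):main} and recorded in Corollary~\ref{C:pol}, that the Coxeter matrix $\Cox_q$ is (up to the factor contributing the $(\va-1)^{c-1}$ in the Coxeter polynomial) governed by the permutation matrix $P(\xi^-_Q)$, whose cycle type is precisely $\ct(q)=(\pi_1,\ldots,\pi_\ell)$. Since weak periodicity and the (reduced) Coxeter number depend only on the similarity class of $\Cox_q$, and by Corollary~\ref{C:pol} its characteristic polynomial factors as $(\va-1)^{c-1}\prod_{a=1}^{\ell}(\va^{\pi_a}-1)$, the first step is to translate both statements into questions about powers of this matrix. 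The key observation is that the eigenvalues of $\Cox_q$ are exactly $1$ (with some multiplicity) together with the $\pi_a$-th roots of unity for each part $\pi_a$.

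For part $(i)$, I would argue that $\coxN(q)$ is finite precisely when $\Cox_q$ is diagonalizable of finite order. First I would note that $\Cox_q^k=\Id$ forces every eigenvalue of $\Cox_q$ to be a $k$-th root of unity and, crucially, forces $\Cox_q$ to be diagonalizable. Now if $\ell>1$, the factorization~(\ref{EQ:2}) shows that $(\va-1)$ divides $\cox_q(\va)$ with multiplicity $c+(\ell-1)>c-1$, equivalently the eigenvalue $1$ appears with geometric multiplicity strictly smaller than its algebraic multiplicity; one way to see this cleanly is to observe that the permutation matrix $P(\xi^-_Q)$ contributes exactly $\ell$ independent eigenvectors for the eigenvalue $1$ (one per cycle), whereas the full $\Cox_q$ has eigenvalue $1$ with algebraic multiplicity $(c-1)+\ell$, and the extra $(c-1)$ copies sit in a nontrivial Jordan block coming from the corank. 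Hence $\Cox_q$ is not diagonalizable when $c>1$, so $\coxN(q)=\infty$; and when $\ell>1$ the eigenvalue $1$ already obstructs a finite power equalling the identity unless the $1$-part is semisimple, which the corank prevents. Conversely if $\ell=1$, then $\ct(q)=(m)$ so $\xi^-_Q$ is a single $m$-cycle, $P(\xi^-_Q)$ is a full cyclic permutation of order $m=\pi_1$, and one checks $\Cox_q^{\pi_1}=\Id$ with $\pi_1$ minimal, giving $\coxN(q)=\pi_1$.

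For part $(ii)$, the reduced Coxeter number asks for the minimal $k$ with $\Id-\Cox_q^k$ nilpotent, which is equivalent to demanding that every eigenvalue of $\Cox_q^k$ equal $1$, i.e. that $\lambda^k=1$ for every eigenvalue $\lambda$ of $\Cox_q$. (Nilpotency of $\Id-\Cox_q^k$ is a statement only about eigenvalues, so the Jordan structure contributed by the corank is irrelevant here.) The eigenvalues of $\Cox_q$ are $1$ together with all $\pi_a$-th roots of unity, so $\lambda^k=1$ for all of them holds exactly when each $\pi_a$ divides $k$; the least such $k$ is $\lcm(\pi_1,\ldots,\pi_\ell)$, which is the claim.

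The main obstacle I anticipate is part~$(i)$: establishing rigorously that finiteness of $\coxN(q)$ fails whenever $\ell>1$ or $c>1$. The subtlety is that weak periodicity (which always holds for non-negative forms) permits $\Id-\Cox_q^k$ to be nilpotent without $\Cox_q^k=\Id$, so one must carefully separate the semisimple eigenvalue condition from genuine diagonalizability. Concretely, I would need to verify that the eigenvalue $1$ of $\Cox_q$ is \emph{not} semisimple once $c>1$, i.e. that the corank contributes a genuine Jordan block of size at least two; this is where comparing the algebraic multiplicity $(c-1)+\ell$ of $1$ in $\cox_q(\va)$ against the count of independent fixed vectors of $P(\xi^-_Q)$, together with the relation $\Cox_q\Id-$eigenstructure inherited from $\Lambda_Q=P(\xi^-_Q)$ via the rank identity $\chr_{BA}(\va)=\va^{n-m}\chr_{AB}(\va)$, must be made precise. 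Everything else reduces to the elementary spectral computation with roots of unity.
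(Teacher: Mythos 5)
Your part (ii) is essentially correct, and it takes a genuinely different route from the paper: you reduce nilpotency of $\Id-\Cox_q^k$ to a purely spectral condition and read the answer off Corollary~\ref{C:pol}, whereas the paper establishes and evaluates explicit matrix identities such as $[\Id-\Cox_Q^k][\Id-\Cox_Q]=I(Q)^{\tr}[\Id-\LCox_Q^k]I(Q^{-1})$ and $[\Id-\Cox_Q^k]^2=I(Q)^{\tr}[\nu_k(\LCox_Q)(\Id-\LCox_Q^k)]I(Q^{-1})$ on incidence vectors of walks. Your route is shorter and works for any non-negative unit form whose Coxeter polynomial is known; the paper's computation yields the extra information that $[\Id-\Cox_q^k]^2=0$ already at $k=\lcm(\pi_1,\ldots,\pi_{\ell})$.

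Part (i), however, contains a genuine error, and it is exactly at the point you yourself flag as the main obstacle. You claim that $\Cox_q$ is not diagonalizable whenever $c>1$ (``the extra $(c-1)$ copies sit in a nontrivial Jordan block coming from the corank''), and your proposed fix is to ``verify that the eigenvalue $1$ of $\Cox_q$ is not semisimple once $c>1$''. This is false, and it contradicts the corollary being proved: for $\ell=1$ and even $c\geq 2$ (such partitions do lie in $\Part^c_1(m)$), part (i) asserts $\coxN(q)=\pi_1<\infty$, which forces $\Cox_q^{\pi_1}=\Id$ and hence diagonalizability. A concrete counterexample is $Q=\overrightarrow{\A}^{1}[(2)]$ of Definition~\ref{Ex(H)}: two vertices joined by three arrows of alternating orientation, so $n=3$, $m=2$, $c=2$, $\ct(q_Q)=(2)$, and a direct computation gives $\Cox_{Q}^{2}=\Id$ although $c=2>1$. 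The correct dichotomy is governed by $\ell$ alone: since $\Id-\Cox_q=(\widecheck{G}_q+\widecheck{G}_q^{\tr})\widecheck{G}_q^{-1}=G_q\widecheck{G}_q^{-1}$, the geometric multiplicity of the eigenvalue $1$ equals $\dim\ker G_q=c$ for every $q\in\Quad^c_{\A}(n)$, while its algebraic multiplicity is $(c-1)+\ell$; hence $1$ is semisimple if and only if $\ell=1$. Your proposal never establishes this, and the comparison with the fixed vectors of $P(\xi^-_Q)$ cannot substitute for it: writing $\Cox_Q=\Id_n-MN$ and $\LCox_Q=\Id_m-NM$ with $M=I(Q)^{\tr}$, $N=I(Q^{-1})$, the products $MN$ and $NM$ share Jordan structure only at nonzero eigenvalues, so $\Cox_Q$ and the (always diagonalizable) permutation matrix $\LCox_Q$ may differ precisely at the eigenvalue $1$ --- which is also why, in the $\ell=1$ direction, ``one checks $\Cox_q^{\pi_1}=\Id$'' conceals the entire difficulty. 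With the corrected criterion your spectral plan does go through (for $\ell>1$ the eigenvalue $1$ is defective, so no power of $\Cox_q$ is the identity; for $\ell=1$ all other eigenvalues are simple roots of $\cox_q$, so $\Cox_q$ is diagonalizable with spectrum among the $\pi_1$-th roots of unity and $\coxN(q)=\pi_1$ by primitivity), but as written the argument proves a false statement; the paper avoids Jordan-theoretic reasoning altogether via the identity $\Cox_Q^k=\Id-I(Q)^{\tr}\nu_k(\LCox_Q)I(Q^{-1})$ from Theorem~\ref{T(O):main}.
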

\begin{proof}
Take $q=q_Q$ for some quiver $Q$. By Theorem~\ref{T(O):main} we have $\Cox_q=\Cox_Q$. Let us first show that $\Cox_Q^k=\Id-I(Q)^{\tr}\nu_k(\LCox_Q)I(Q^{-1})$ for any $k \geq 1$, where $\nu_k(\va)$ is the polynomial $\nu_k(\va)=\va^{k-1}+\va^{k-2}+\ldots+\va+1$. Indeed, by induction on $k$ and from Theorem~\ref{T(O):main}, we have
\begin{eqnarray}
 \Cox_Q^k & = & \Cox_Q\Cox_Q^{k-1}=[\Id-I(Q)^{\tr}I(Q^{-1})][\Id-I(Q)^{\tr}\nu_{k-1}(\LCox_Q)I(Q^{-1})] \nonumber \\
& = & \Id-I(Q)^{\tr}[\Id+\nu_{k-1}(\LCox_Q)-I(Q^{-1})I(Q)^{\tr}\nu_{k-1}(\LCox_Q)]I(Q^{-1}) \nonumber \\
& = & \Id-I(Q)^{\tr}[\Id+[\Id-I(Q^{-1})I(Q)^{\tr}]\nu_{k-1}(\LCox_Q)]I(Q^{-1}) \nonumber \\
& = & \Id-I(Q)^{\tr}[\Id+\LCox_Q\nu_{k-1}(\LCox_Q)]I(Q^{-1}) \nonumber \\
& = & \Id-I(Q)^{\tr}\nu_k(\LCox_Q)I(Q^{-1}). \label{EQOne}
\end{eqnarray}

For a vertex $v \in Q_0$ and an integer $a \geq 0$, take $v_a=(\xi^-_Q)^a(v)$. Let $\beta$ be a walk in $Q^{-1}$ from a vertex $v$ to a  different vertex $w$. Note that, using equation $\Lambda_Q=P(\xi^-_Q)$ of Theorem~\ref{T(O):main}, and Remark~\ref{R(M):inc}, we have
\begin{eqnarray}
I(Q)[\Id-\Cox_Q^k]\vdim(\beta)&=&I(Q)I(Q)^{\tr}\nu_k(\LCox_Q)I(Q^{-1})\vdim(\beta) \nonumber \\ & = & L_Q\nu_k(\LCox_Q)(\bas_v-\bas_w) \nonumber \\
& = & L_Q[(\bas_{v_0}+\ldots + \bas_{v_{k-1}})-(\bas_{w_0}+\ldots+\bas_{w_{k-1}})]. \label{EQTwo}
\end{eqnarray}
Recall that, since $Q$ is connected, the null space of the Laplace matrix $L_Q$ is generated by the (column) vector $\mathbbm{1} \in \Z^m$ with all entries equal to $1$ (Theorem~\ref{T(O):main}$(ii)$).

\medskip
To show $(i)$, assume first that $\ell=1$. Then $\nu_{\pi_1}(\Lambda_Q)=\pi_1 [\mathbbm{1}\mathbbm{1}^{\tr}]$ (that is, the matrix with all entries equal to $\pi_1$), and therefore, $\Cox_Q^{\pi_1}=\Id$ by equation~(\ref{EQOne}). Now, if $1\leq k<\pi_1$, taking a walk from $v$ to $w=v_1$, from~(\ref{EQTwo}) we get
\[
I(Q)[\Id-\Cox_Q^k]\vdim(\beta)=L_Q(\bas_{v_0}-\bas_{v_k}) \neq 0,
\]
since $v_0 \neq v_k$ (for $\ell=1$ and $k<\pi_1$). This shows that the Coxeter number of $q$ is $\coxN(q)=\pi_1$.

Assume now that $\ell>1$. Note that if $v$ and $w$ belong to different $\xi^-_Q$ orbits in $Q_0$, then
\[
(\bas_{v_0}+\ldots + \bas_{v_{k-1}})-(\bas_{w_0}+\ldots+\bas_{w_{k-1}}) \notin \Z \mathbbm{1},
\]
and therefore, by~(\ref{EQTwo}) we have $\Cox_Q^k \neq \Id$ for any $k \geq 1$, which completes the proof of $(i)$.

\medskip
Observe now that,
\begin{eqnarray}
[\Id- \Cox_Q^k][\Id-\Cox_Q] & = & [I(Q)^{\tr}\nu_{k}(\LCox_Q)I(Q^{-1})][I(Q)^{\tr}I(Q^{-1})] \nonumber \\
& = & \nonumber I(Q)^{\tr}[\nu_k(\LCox_Q)(\Id-\LCox_Q)]I(Q^{-1})  \\
& = & I(Q)^{\tr}[\Id-\LCox^k_Q]I(Q^{-1}),\label{EQThree}
\end{eqnarray}
and
\begin{eqnarray}
[\Id- \Cox_Q^k]^2 & = & [I(Q)^{\tr}\nu_{k}(\LCox_Q)I(Q^{-1})][I(Q)^{\tr}\nu_k(\LCox_Q)I(Q^{-1})] \nonumber \\
& = & \nonumber I(Q)^{\tr}[\nu_k(\LCox_Q)(\Id-\LCox_Q)\nu_k(\LCox_Q)]I(Q^{-1})  \\
& = & I(Q)^{\tr}[\nu_k(\LCox_Q)(\Id-\LCox^k_Q)]I(Q^{-1}).\label{EQFour}
\end{eqnarray}

To show $(ii)$, recall that the order of $\xi^-_Q$ is $\lcm(\ct(\xi^-_Q))$. Since $\LCox_Q=P(\xi^-_Q)$, by~(\ref{EQThree}) we have $[\Id- \Cox_Q^k][\Id-\Cox_Q]=0$ if $k=\lcm(\ct(q))$, that is,
\[
[\Id- \Cox_Q^k]^2=[\Id- \Cox_Q^k][\Id- \Cox_Q]\nu_k(\Cox_Q)=0.
\]
Assume now that $k<\lcm(\ct(q))$, and choose a vertex $v \in Q_0$ such that $v \neq v_k$. Take $w=v_1$ and $\beta$ a walk from $v$ to $w$ in $Q^{-1}$. Similarly, as in~(\ref{EQTwo}), by~(\ref{EQFour}) we have
\begin{eqnarray}
I(Q)[\Id- \Cox_Q^k]^2\vdim(\beta) & = & L_Q\nu_k(\LCox_Q)(\Id-\LCox_Q^k)(\bas_{v_0}-\bas_{v_1}) \nonumber \\
& = & L_Q\nu_k(\LCox_Q)[(\bas_{v_0}-\bas_{v_1})-(\bas_{v_k}-\bas_{v_{k+1}})] \nonumber \\
& = & L_Q[\bas_{v_0}+\bas_{v_{2k}}-2\bas_{v_k}] \neq 0, \nonumber
\end{eqnarray}
since $\bas_{v_0}+\bas_{v_{2k}}-2\bas_{v_k} \notin \Z\mathbbm{1}$, for $v_0 \neq v_k$. This shows that $[\Id- \Cox_Q^k]^2 \neq 0$ for $k<\lcm(\ct(q))$, which completes the proof.
\end{proof}

To illustrate the main results, we end this section with some examples of Coxeter polynomials and (reduced) Coxeter numbers that occur among quadratic forms $q \in \Quad^c_{\A}(n)$ for small $n$ and $c$. For instance, if $n=5$ and $c=2$, then $m=n-c+1=4$ and the set
\[
\Part_1^2(4)=\{\pi \vdash m \mid \ell(\pi) \in \{1,3\}\},
\]
contains only two partitions, namely $(4)$ and $(2,1,1)$. By Theorem~\ref{MAIN} and Corollary~\ref{C(F):num}, for $q \in \Quad^2_{\A}(5)$ we have either
\[
\cox_q(\va)=(\va^4-1)(\va-1), \quad \text{or} \quad \cox_q(\va)=(\va^2-1)(\va-1)^3,
\]
with corresponding Coxeter numbers $4$ and $\infty$, and reduced Coxeter numbers $4$ and $2$. Similarly, if $n=8$ and $c=4$, then $m=n-c+1=5$, and
\[
\Part_1^4(5)=\{\pi \vdash m \mid \ell(\pi) \in \{1,3,5\}\}.
\]
 The four partitions in $\Part_1^4(5)$, and corresponding Coxeter polynomials and (reduced) Coxeter numbers among the unit forms in $\Quad_{\A}^4(8)$, are listed in the following table.

\eject

\begin{center}
\begin{tabular}{l c c c}
Partition & Coxeter polynomial & Coxeter number & Reduced Coxeter number \\
\hline \\
(5) & $(\va^5-1)(\va-1)^3$ & 5 & 5 \\
(3,1,1) & $(\va^3-1)(\va-1)^5$ & $\infty$ & 3 \\
(2,2,1) & $(\va^2-1)^2(\va-1)^4$ & $\infty$ & 2 \\
(1,1,1,1,1) & $(\va-1)^8$ & $\infty$ & 1
\end{tabular}
\end{center}

\section{Comments and algorithms} \label{(S):Five}

An important problem in the theory of quadratic forms, or the corresponding graphical formulation in terms of edge-bipartite graphs developed by Simson and collaborators~\cite{dS16a,dS20,SZ17}, is to find characterizations for the strong Gram congruence. So far, the pair consisting of the Dynkin type and the spectrum of the Coxeter polynomial of a connected non-negative unit form $q$ (the so-called \emph{Coxeter-Dynkin type} of $q$), seems to be a good candidate for such characterization (see for instance~\cite[Problem~1.9]{dS16a}).

\subsection*{Problem A.} 
Let $q$ and $\widetilde{q}$ be connected weakly Gram congruent non-negative unit forms. If the Coxeter polynomials of $q$ and $\widetilde{q}$ coincide, are the unit forms $q$ and $\widetilde{q}$ strongly Gram congruent?

\medskip
An affirmative answer to Problem~A for positive unit forms with small number of variables (not exceeding $9$), including the exceptional cases $\E_6$, $\E_7$ and $\E_8$, is part of the work of Simson~\cite{dS20} and collaborators~\cite{BFS14,KS15a,KS15b,KS15c}, aiming much general problems on edge-bipartite graphs, morsifications and mesh-geometries. The positive cases of Dynkin type $\D_n$ and $\A_n$ were solved recently in~\cite{dS21a} and~\cite{dS21b} respectively, also in a wider context, and similar results for principal unit forms associated to posets were shown in~\cite{GSZ14}. The main construction of the paper, the cycle type, allows a reformulation of Problem~A for the case of Dynkin type $\A_r$:

\subsection*{Problem B.} 
Does the cycle type assignment induce a bijection
\[
\ct:[\Quad_{\A}^c(n)/ \approx] \longrightarrow \Part^c_1(n-c+1),
\]
for any $n \geq 1$ and $0 \leq c <n$?

\medskip
The bijectivity of $\ct$ for the cases $c \in \{0,1\}$ is consequence of the main results Theorems~3.16 and~4.12 of~\cite{jaJ2020a}, concerning the positive and principal cases of Dynkin type $\A_n$ respectively. In an upcoming work~\cite{jaJ2020c} we will approach Problem~B in full generality with matricial techniques.

Let $\CSpec(q)$ denote the \textbf{Coxeter spectrum} of a connected non-negative unit form $q$, that is, the multi-set of roots of the Coxeter polynomial $\cox_q(\va)$. By non-negativity, every $\va \in \CSpec(q)$ is a root of unity (cf.~\cite[Theorems~2.6 and~3.4]{mS05}).

\begin{remark}\label{RalgMult}
Let $q$ be a unit form in $\Quad_{\A}^c(n)$, for $n \geq 1$ and $0 \leq c<n$, and consider the cycle type $\ct(q)=(\pi_1,\ldots,\pi_{\ell})$ of $q$.
\begin{itemize}
 \item[a)] If $\eta$ is a primitive $d$-root of unity for some $d > 1$, then the multiplicity of $\eta$ in $\CSpec(q)$ is
 \[
 \#\{ \text{$a \in \{1,\ldots,\ell\}$ such that $d$ divides $\pi_a$}\},
 \]
where $\# S$ denotes the cardinality of a set $S$.
 \item[b)] The multiplicity of $1$ in $\CSpec(q)$ is $c+(\ell-1)$.
\end{itemize}
\end{remark}
\begin{proof}
Recall from Corollary~\ref{C:pol} that the Coxeter polynomial of $q$ is given by
\[
\cox_q(\va)=(\va-1)^{c-1}\prod_{a=1}^{\ell}(\va^{\pi_a}-1).
\]
Let $\eta$ be a primitive $d$-root of unity for some $d > 1$. It is well known that $\eta$ is a root of $(\va^t-1)$ if and only if $d$ divides $t$, and in that case the multiplicity of $\eta$ is one (see for instance~\cite[\S 3.3]{Pra01}). This shows claim $(a)$. To show $(b)$ consider the alternative factorization~(\ref{EQ:2}) of $\cox_q(\va)$ given right after Corollary~\ref{C:pol},
\[
\cox_Q(\va)=(\va-1)^{c+(\ell-1)}\prod_{t=1}^{\ell}\nu_{\pi_t}(\va).
\]
Thus, claim $(b)$ holds since $\nu_t(1)\neq 0$ for any $t \geq 0$.
\end{proof}

With Problem~B in mind, the unit forms associated to the representative quivers of Section~\ref{(S):tHree} are proposed representatives of the strong Gram classes in $\Quad_{\A}^c(n)$, playing the role of the canonical extensions of $\A_r$ defined by Simson in~\cite{dS16a} for the weak Gram congruence. The following straightforward observation, relating Simson's construction with those in Section~\ref{(S):tHree}, will be useful for our computations.

\begin{remark}\label{R(H)}
For $r \geq 1$ and $c \geq 0$, consider the quiver with $r+1$ vertices and $r+c$ arrows, given by
 \begin{equation*}
Q^c_r =  \left\{ \begin{array}{l l}
\overrightarrow{\A}^{\frac{c}{2}}[(r+1)]\mathcal{V}^c_r\mathcal{T}, & \text{if $c$ is even},\\
\overrightarrow{\A}^{\frac{c-1}{2}}[(r,1)]\mathcal{V}^c_r, & \text{if $c$ is odd},
\end{array} \right.
\end{equation*}
where, if $c$ is even, $\mathcal{V}^c_r$ is the inversion of the arrows $r+2i$ for $i=1,\ldots,\frac{c}{2}$, and $\mathcal{T}$ is the iterated flation (see definition and notation in~\cite[\S 2.5]{jaJ2020a}) given by
\[
\mathcal{T}=\mathcal{T}^{1}\cdots \mathcal{T}^{c/2}, \quad \text{where $\mathcal{T}^{i}:=\mathcal{T}^{+1}_{r+i,r-1}\cdots \mathcal{T}^{+1}_{r+i,2}\mathcal{T}^{+1}_{r+i,1}$, for $i=1,\ldots,c/2$}.
\]
If $c$ is odd,  $\mathcal{V}^c_r$ is the inversion of the arrows $r+2i$ for $i=1,\ldots,\frac{c-1}{2}$. To be precise,
\[
Q^c_r=\xymatrix@C=1.5pc{\bulito \ar[r]^-{1} & \bulito \ar[r]^-{2} & \bulito \ar[r]^-{3} & \bulito \ar[r]|{\cdots} & \bulito \ar[r]^-{r-1} & \bulito \ar[r]^-{r} & \bulito \ar@<1ex>@/^10pt/[llllll]_-{r+1}^-{\cdots} \ar@<5ex>@/^10pt/[llllll]_-{r+c} }
\]
Let $\widehat{\A}^{(c)}_r$ be the canonical $c$-extension of $\A_r$ defined in~\cite{dS16a}. Then $\widehat{\A}^{(c)}_r = \Inc\left( Q^c_r \right)$.
\end{remark}

Next we show how to find a quiver $Q$ with $n$ arrows such that $q=q_Q$, given that $q$ is a connected non-negative unit form of Dynkin type $\A_{r}$ in $n \geq r$ variables, following~\cite[Proposition~3.15]{jaJ2020a}.

\begin{algorithm}\label{A:one}\small $ $\par

\smallskip
\textbf{Input:} A connected non-negative quadratic unit form $q$ in $n \geq 1$ variables, and of Dynkin type $\A_r$ for some $r \geq 1$.

\smallskip
\textbf{Output:} A connected loop-less quiver $Q$ with $n$ arrows and $m=r+1$ vertices, such that $q=q_Q$.

\smallskip
\underline{Step 1.} Compute the upper triangular Gram matrix $\widecheck{G}_q$ of $q$, and the corresponding symmetric Gram matrix $G_q=\widecheck{G}_q+\widecheck{G}_q^{\tr}$. Recall that $\bas_i G_q \bas_j=q(\bas_i+\bas_j)-q(\bas_i)-q(\bas_j)$, for any canonical vectors $\bas_i, \bas_j$ in $\Z^n$.

\smallskip
\underline{Step 2.} Find a $\Z$ invertible matrix $B$ such that $G_{\widehat{\A}^{(c)}_r}=B^{\tr}G_qB$, where $G_{\widehat{\A}^{(c)}_r}$ denotes the symmetric Gram matrix of the canonical $c$-extension $\widehat{\A}^{(c)}_r$ of $\A_r$. For instance, use Algorithm~3.18 in~\cite{SZ17}.

\smallskip
\underline{Step 3.} Calculate the inverse matrix $B^{-1}$, and take $I:=I(Q^c_r)B^{-1}$ where $Q^c_r$ is the quiver given in Remark~\ref{R(H)}. Verify that
\[
I^{\tr}I=B^{-\tr}I(Q^c_r)^{\tr}I(Q^c_r)B^{-1}=B^{-\tr}G_{\widehat{\A}^{(c)}_r}B^{-1}=G_q.
\]

\underline{Step 4.} Take $Q_0=\{1,\ldots,m=r+1\}$ and $Q_1=\{1,\ldots,n\}$. For every $i \in Q_1$, the column vector $b=I\bas_i$ satisfies $b^{\tr}b=\bas_i^{\tr}G_q\bas_i=2$, since $q$ is unitary. Then there are different indices $s,t \in Q_0$ with
\[
b=S\bas_s+T\bas_t, \quad \text{for some signs $S,T \in \{\pm 1\}$}.
\]
Moreover, $\mathbbm{1}^{\tr}I=\mathbbm{1}^{\tr}I(Q^c_r)B^{-1}=0$, which implies that $\mathbbm{1}^{\tr}b=0$. Then, after switching the labels $s$ and $t$ if necessary, we may assume that $S=+1$ and $T=-1$. Take $\sou(i)=s$ and $\tar(i)=t$, which defines a quiver $Q=(Q_0,Q_1,\sou,\tar)$ with $I=I(Q)$. That $Q$ has no loop is clear, since $s \neq t$ for an arrow $i$ as above. That $Q$ is connected follows from~\cite[Lemma~3.4$(d)$]{jaJ2020a}. By Step~3 and the definition $q_Q(x):=\frac{1}{2}||I(Q)x||^2$ for $x \in \Z^n$, we have
\[
q(x)=\frac{1}{2}x^{\tr}G_qx=\frac{1}{2}x^{\tr}I(Q)^{\tr}I(Q)x=q_Q(x),
\]
as wanted.
\end{algorithm}\normalsize

The cycle type $\ct(q)$ of a quadratic form $q$ in $\Quad_{\A}^c(n)$, for any $n \geq 1$ and any $ 0 \leq c < n$, can be found directly from a quiver $Q$ with $q=q_Q$. Indeed, compute first the permutation $\xi^-_Q$ (either by constructing the structural decreasing walks $\alpha^-_Q(v)$ for any vertex $v$, or directly by computing the matrix $\Lambda_Q=\Id_m-I(Q)\widecheck{G}_q^{-1}I(Q)^{\tr}=P(\xi^-_Q)$, cf.~Theorem~\ref{T(O):main} and Proposition~\ref{INVERSE}). Then find a cycle decomposition of $\xi^-_Q$, using for instance the \texttt{full\_cyclic\_form()} sympy Python library function, and store the corresponding lengths, ordered non-increasingly, in a list $\ct(q)$. We stress that the cycle type $\ct(q)$ can be recovered from the Coxeter polynomial of $q$ (or its spectrum), as indicated in Algorithm~\ref{A:two} below. We need the following straightforward observation.

\begin{remark}\label{A:r}
Let $\pi=(\pi_1,\ldots,\pi_{\ell}) \vdash m$ be a partition of the integer $m \geq 1$. Then
\[
\pi_1=\max \{ \text{$t \geq 1$ such that $(\va^t-1)$ divides $\chr_{\pi}(\va)$}\}.
\]
\end{remark}
\begin{proof}
Take $m_0:=\max \{ \text{$t \geq 1$ such that $(\va^t-1)$ divides $\chr_{\pi}(\va)$}\}$. Since $\chr_{\pi}(\va)=\prod_{a=1}^{\ell}(\va^{\pi_a}-1)$, clearly $m_0 \geq \max\{\pi_1,\ldots,\pi_{\ell}\}= \pi_1$. On the other hand, since $(\va^{m_0}-1)$ divides $\chr_{\pi}(\va)$, any primitive $m_0$-root of unity is a root of $\chr_{\pi}(\va)$. By Remark~\ref{RalgMult}$(a)$, this means that $m_0$ divides $\pi_a$ for some $a \in \{1\ldots,\ell\}$. In particular $m_0 \leq \pi_1$, and the claim follows.
\end{proof}

\begin{algorithm}\small \label{A:two}$ $\par

\smallskip
\textbf{Input:} The Coxeter polynomial $\cox_q(\va)$ of a quadratic form $q$ in $\Quad_{\A}^c(n)$, for $n \geq 1$ and $0 \leq c < n$.

 \smallskip
\textbf{Output:} The cycle type $\ct(q)$ of $q$.

\smallskip
\underline{Step 1.} Take an empty list $\ct(q)=\emptyset$.

\smallskip
\underline{Step 2.} By Theorem~\ref{MAIN}, there is a polynomial $p_0(\va)$ such that $\cox_q(\va)=(\va^{c-1}-1)p_0(\va)$, and a partition $\pi^0$ of $m_0=m$ such that $p_0(\va)$ is the characteristic polynomial of $\pi^0$.

\smallskip
\underline{Step 3.} Given the non-constant polynomial $p_i(\va)$ for $i \geq 0$, find the maximal $t \geq 1$ such that $p_i(\va)$ is a multiple of $(\va^t-1)$, and define $p_{i+1}(\va)$ such that $p_i(\va)=p_{i+1}(\va)(\va^t-1)$. By Remark~\ref{A:r}, if $p_i(\va)$ is the characteristic polynomial of the partition $\pi^i=(\pi^i_1,\ldots,\pi^i_{\ell_i})$, then $p_{i+1}(\va)$ is the characteristic polynomial of the partition $\pi^{i+1}:=(\pi^i_2,\ldots,\pi^i_{\ell_i})$.  Append the integer $t=\pi^i_1$ to the list $\ct(q)$.

\smallskip
\underline{Step 4.} Starting with the polynomial $p_0(\va)$ of Step~2, repeat Step~3 until we find a constant polynomial $p_{\ell}(\va)\equiv 1$. We end up with a list $\ct(q)$ with $\ell$ elements, which is the wanted cycle type of $q$ by Remark~\ref{A:r}.
\end{algorithm}\normalsize

We close our discussion with a procedure to explicitly find all partitions of fixed length (Algorithm~\ref{A:three}). Using the Main Theorem of the paper, we may find in this way all Coxeter polynomials among connected non-negative unit forms of Dynkin type $\A_{m-1}$ (Algorithm~\ref{A:four}). For the sake of readability, partitions of the integer $m \geq 1$ will be called simply $m$-partitions.

\begin{algorithm}\small \label{A:three}
We describe an implementable function \texttt{partitions\_by\_length$(m,\ell)$} that recursively constructs all $m$-partitions of fixed length $\ell$.

\smallskip
\textbf{Input:} Integers $m \geq 1$ and $\ell \geq 1$.

\smallskip
\textbf{Output:} A (possibly empty) set $\mathbf{P}$ containing all $m$-partitions of length $\ell$.

\smallskip
 \underline{Step 1.} If $1<\ell<m$, consider the result $\mathbf{P}'$ of \texttt{partitions\_by\_length$(m-1,\ell-1)$}, and take
\[
\mathbf{P}_1=\{\text{$(\pi'_1,\ldots,\pi'_{\ell-1},1)$ such that $(\pi'_1\ldots,\pi'_{\ell-1}) \in \mathbf{P}'$}\}.
\]
 Clearly, $\mathbf{P}_1$ is the set of all $m$-partitions $(\pi_1,\ldots,\pi_{\ell})$ of length $\ell$ having $\pi_{\ell}=1$.

\smallskip
\underline{Step 2.}  If $1<\ell<m$, consider the result $\mathbf{P}''$ of \texttt{partitions\_by\_length$(m-\ell,\ell)$}, and take
\[
\mathbf{P}_2=\{ \text{$(\pi''_1+1,\ldots,\pi''_{\ell}+1)$ such that $(\pi''_1\ldots,\pi''_{\ell}) \in \mathbf{P}''$}\}.
\]
 Clearly, $\mathbf{P}_2$ is the set of all $m$-partitions $(\pi_1,\ldots,\pi_{\ell})$ of length $\ell$ having $\pi_{\ell}>1$.

\smallskip
\underline{Step 3.} Return
\begin{equation*}
\mathbf{P} = \left\{ \begin{array}{l l}
(m), & \text{if $\ell=1$},\\
\mathbf{P}_1 \cup \mathbf{P}_2, & \text{if $1<\ell<m$}, \\
(1,\ldots,1), & \text{if $\ell=m$},\\
\emptyset, & \text{if $\ell>m$}.\end{array} \right.
\end{equation*}\normalsize
\end{algorithm}

Recall that for any $c \geq 0$ and $m \geq 1$,
\[
\Part_1^c(m)=\{\pi \vdash m \mid 0 \leq c-(\ell(\pi)-1) \equiv 0 \mod 2 \}.
\]

\begin{algorithm}\small \label{A:four}$ $\par

\smallskip
\textbf{Input:} Integers $n \geq 1$ and $0 \leq c < n$.

\smallskip
\textbf{Output:} The set $\mathbf{CP}_{\A}^c(n)$ of all Coxeter polynomials among the quadratic unit forms in $\Quad_{\A}^c(n)$.

\smallskip
\underline{Step 1.} Take $m=n-c+1$ and consider the set $\mathcal{L}=\{\ell \geq 1 \mid 0 \leq c-(\ell-1) \equiv 0 \mod 2 \}$.

\eject
\underline{Step 2.} For any $\ell \in \mathcal{L}$ take $\mathbf{P}_{\ell}$ the result of the function \texttt{partitions\_by\_length$(m,\ell)$} constructed in Algorithm~\ref{A:three}, and
take the (disjoint) union
\[
\mathbf{P}= \bigcup_{\ell \in \mathcal{L}} \mathbf{P}_{\ell}.
\]

\underline{Step 3.} For any partition $\pi=(\pi_1,\ldots,\pi_{\ell})$ in $\mathbf{P}$, take the polynomial
\[
\chr_{\pi}(\va)=\prod_{a=1}^{\ell}(\va^{\pi_a}-1),
\]
and consider the set $\mathbf{CP}_{\A}^c(n)=\{(\va-1)^{c-1}\chr_{\pi}(\va) \mid \pi \in \mathbf{P}\}$. By Theorem~\ref{MAIN}, we have
\[
\mathbf{CP}_{\A}^c(n)=\{ \cox_q(\va) \mid q \in \Quad_{\A}^c(n) \},
\]
where $\cox_q(\va)$ denotes the Coxeter polynomial of a quadratic form $q$.
\end{algorithm}\normalsize

\begin{remark}\label{A:r2}
Observe that, as consequence of Algorithm~\ref{A:two}, the sets $\mathbf{P}$ and $\mathbf{CP}_{\A}^c(n)$ constructed in Algorithm~\ref{A:four} have the same cardinality. That is, the number of Coxeter polynomials appearing among connected non-negative unit forms in $n$-variables, of Dynkin type $\A_{m-1}$ and corank $c$, is the number of partitions of the integer $m=n-c+1$ whose lengths $\ell$ satisfy $0 \leq c-(\ell-1) \equiv 0 \mod 2$.
\end{remark}

\subsection*{Acknowledgments}

I would like to express my thanks to J.A. de la Pe\~na for many academical discussions, to the Instituto de Matem\'aticas UNAM, Mexico, for a postdoctoral grant, and to the anonymous referees for their careful revisions and useful suggestions.

\bibliographystyle{fundam}

\end{document}